\title{Abstract commensurability and the Gupta--Sidki group}
\author{Alejandra Garrido\\
}
\date{}
\DeclareMathOperator{\St}{St}
\DeclareMathOperator{\Aut}{Aut}
\newcommand{\lf}{\leq_{\mathrm{f}}}
\newcommand{\ls}{\leq_{\mathrm{s}}}
\declaretheorem[name=Theorem]{thm}
\declaretheorem[name=Lemma,numberwithin=section]{lem}
\declaretheorem[name=Proposition, sibling=lem]{prop}
\begin{document}

\maketitle

\begin{abstract}

We study the subgroup structure of the infinite torsion $p$-groups defined by Gupta and Sidki in 1983. 
In particular, following results of Grigorchuk and Wilson for the first Grigorchuk group, we show that all infinite finitely generated subgroups of the Gupta--Sidki 3-group $G$ are abstractly commensurable with $G$ or $G\times G$.
 As a consequence, we show that $G$ is subgroup separable and from this it follows that its membership problem is solvable. 

Along the way, we obtain a characterization of finite subgroups of $G$ and establish an analogue for the Grigorchuk group. 

\end{abstract}

\section{Introduction}

Groups of automorphisms of regular rooted trees have received considerable attention in the last few decades. 
Their interest derives from the striking properties of some of the first examples studied, the Grigorchuk group \cite{grig} and the Gupta--Sidki $p$-groups \cite{guptasidki}. 
These groups are easily understood examples of infinite finitely generated torsion groups, answering the General Burnside Problem. 
Furthermore, the Grigorchuk group was the first group to be shown to be of intermediate word growth  and to be amenable but not elementary amenable (see \cite{GrigIntermediate}). 
Amenability of the Gupta--Sidki $p$-groups (among many other examples) was proved in \cite{AmenableBoundedAutomata}.
These and other striking results prompt one to ask what other unusual properties these groups may have.

In \cite{griwil}, the authors establish another notable result about the Grigorchuk group, 
namely that all its infinite finitely generated subgroups are (abstractly) commensurable with the group itself. 
Recall that two groups are \emph{(abstractly) commensurable} if they have isomorphic subgroups of finite index. 
This notion translates into geometric terms as ``having a common finite-sheeted covering": two spaces which have a common finite-sheeted covering have commensurable fundamental groups.
For this reason it is an important concept in geometric group theory. 
It also appears in the study of lattices in semisimple Lie groups, in profinite groups and other areas of group theory. 
Having only one commensurability class of infinite finitely generated subgroups is a very strong restriction on subgroup structure and examples where it is known to hold are scarce. 
Following a similar general strategy to that in \cite{griwil}, we will show that an analogous result holds for the Gupta--Sidki 3-group:

\begin{thm}\label{mainthm}
Every infinite finitely generated subgroup of the Gupta--Sidki {\rm 3}-group $G$ is commensurable with $G$ or $G\times G$. 
\end{thm}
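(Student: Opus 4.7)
My plan is to follow the Grigorchuk--Wilson strategy from \cite{griwil}, adapted to the ternary rooted tree $T$ and the branch structure of the Gupta--Sidki group $G$. For each vertex $v$ of level $n$, let $\varphi_v:\St_G(n)\to G$ denote the section map coming from the self-similar embedding $\St_G(n)\hookrightarrow G^{3^n}$. I will analyze a finitely generated infinite subgroup $H\leq G$ through the collection of sections $H_v:=\varphi_v(H\cap\St_G(n))$, which are themselves subgroups of $G$ and enjoy the commensurability class of $H\cap\St_G(n)$ up to a product structure.

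The first step is a length-reduction lemma in the spirit of \cite{griwil}: if $h\in\St_G(n)$ is written as a word in the standard generators of $G$, then its sections $\varphi_v(h)$ are strictly shorter except in a small, explicit list of minimal configurations. Iterating this and choosing $n$ large enough, I can ensure that every $H_v$ is either finite or has generators of uniformly bounded complexity. The characterization of finite subgroups of $G$ announced in the abstract is what I expect to pin down these minimal configurations and, in particular, to force the finite sections to lie in a bounded list of subgroups of $G$.

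The second step is a dichotomy organized by the orbit structure of $H$ on level $n$ and by which sections $H_v$ are infinite. Using the branch property of $G$ -- the existence of a finite-index subgroup $K\leq G$ with $K\times K\times K\leq K$ under the self-similar embedding -- I argue: if some $H$-orbit on level $n$ supports a section $H_v$ of finite index in $G$, then the combination of transitivity on the orbit and the branch inclusion absorbs all three branches below a level-$(n-1)$ vertex into a single commensurable copy of $K$; repeating upwards gives $H$ commensurable with $G$ itself. Otherwise, the infinite sections are confined to a proper union of $H$-orbits and, after passing to a suitable commensurable subgroup, $H$ embeds as a finite-index subgroup of a product of copies of $G$ indexed by those orbits.

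The hard part will be showing that the dichotomy is sharp -- that this "proper union of orbits" always reduces to exactly two copies, so that no commensurability class such as $G^3$ or $G^k$ with $k\geq 3$ survives. This rests on the absorption $\St_G(1)\hookrightarrow G\times G\times G$ together with the $3$-fold cyclic symmetry: whenever three $G$-like sections appear symmetrically under a level-$1$ vertex, they collapse into a single commensurable copy of $G$, while two asymmetric ones persist and produce $G\times G$. Establishing this collapse uniformly, not just for $\St_G(1)$ itself but for the infinite finitely generated $H$ under consideration, is the principal technical obstacle and is where I expect the length-reduction lemma and the finite-subgroup classification to be used most delicately.
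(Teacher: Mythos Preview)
Your overall strategy is the same as the paper's---follow \cite{griwil}, use length reduction to set up an induction on sections, and then show that the possible commensurability classes of infinite sections are only $G$ and $G\times G$. But you have the division of labour inverted, and this matters for whether the argument goes through cleanly.

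The collapse of $G^{(\times 3)}$ (and more generally $G^{(\times k)}$) into the two classes $G$ and $G\times G$ is \emph{not} the hard part, and it does not require length reduction or the finite-subgroup classification. It is a direct algebraic consequence of the fact that $B'=G'\times G'\times G'$ has finite index in $G$ (\autoref{facts}), which gives immediately that $G^{(\times i)}$ is commensurable with $G^{(\times j)}$ whenever $i\equiv j\pmod{2}$ (\autoref{comm}). So once you know that $H$ has finite index in some $G^{(\times r)}$, you are done. The genuine difficulty is the other direction: establishing that \emph{every} infinite finitely generated $H\leq G$ lands, up to finite index, inside such a product. The paper handles this by an abstract closure theorem (\autoref{thm3}): one defines the class $\mathcal{C}$ of subgroups that are finite or commensurable with $G$ or $G\times G$, and checks three closure properties, the only nontrivial one being that a subdirect product of three groups in $\mathcal{C}$ is again in $\mathcal{C}$. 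That check is pure subdirect-product algebra using just-infiniteness of $G$ and the congruence subgroup property, not length reduction.

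The length reduction is used exclusively inside the proof of \autoref{thm3} itself, to push a putative minimal counterexample down the tree; the delicate case analysis (Lemmas~\ref{D=1}--\ref{D=3}) handles the small-length generators where the naive bound $(3D+3)/4<D$ fails. Your orbit-structure dichotomy (``some section of finite index in $G$'' versus ``infinite sections confined to a proper union of orbits'') is not obviously equivalent to this, and the first branch in particular needs justification: a single section of finite index in $G$ does not by itself force $H$ to be commensurable with $G$ rather than $G\times G$. I would recommend recasting the argument in the paper's two-step form: (i) prove the subdirect-product closure lemma for $\mathcal{C}$ using \autoref{comm} and just-infiniteness, and (ii) feed that into the length-reduction induction machine.
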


This raises two questions.
The first is whether the commensurability classes are actually distinct; by contrast, the Grigorchuk group is commensurable with its direct square.
This is the motivation for the work carried out in \cite{MeWil}. 
In that paper, more general results on the structure of subgroups of branch groups yield that the classes are indeed distinct, for many examples of groups acting on $p$-regular trees where $p$ is an odd prime. 

The second question concerns the restriction to $p=3$ in \autoref{mainthm}.
It seems likely that this restriction is unnecessary.
However, our proof relies heavily on a delicate length reduction argument that is only available for $p=3$. 

Our first main theorem allows us to prove another result about the subgroup structure of $G$, also parallel to a result in \cite{griwil}.
\begin{thm}\label{thm2}
The Gupta--Sidki {\rm 3}-group is subgroup separable and hence has solvable membership problem. 
\end{thm}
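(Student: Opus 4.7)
The plan is to deduce Theorem~\ref{thm2} from Theorem~\ref{mainthm} together with the branch structure and residual finiteness of $G$. Subgroup separability means that every finitely generated $H \leq G$ is closed in the profinite topology on $G$: for each $g \notin H$, I must produce a finite-index $K \leq G$ with $H \subseteq K$ and $g \notin K$. I will split on whether $H$ is finite.

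When $H$ is finite, residual finiteness suffices. Since $G$ acts faithfully on the ternary rooted tree with $\bigcap_{n}\St_G(n)=1$, for each $h \in H$ the element $gh^{-1}$ is non-trivial and hence lies outside some $\St_G(n_h)$. Taking $n=\max_{h\in H} n_h$, the subgroup $\St_G(n)$ is normal of finite index in $G$, and $H\cdot\St_G(n)$ is a subgroup of finite index containing $H$ but not $g$.

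The substance lies in the infinite case, where I invoke Theorem~\ref{mainthm}. Abstract commensurability by itself is too weak; the point is that the proof of Theorem~\ref{mainthm}, following the strategy of Grigorchuk--Wilson, should produce a concrete structural embedding rather than merely an abstract isomorphism. Specifically, I expect to extract that $H$ virtually contains a naturally embedded branch-like subgroup of $G$---for instance a finite-index subgroup of $\rist_G(u_1)\times\cdots\times\rist_G(u_k)$ over some antichain $\{u_i\}$ of vertices, or a finite-index subgroup of the derived subgroup of a level stabilizer restricted to a subtree. Combined with the fact that, for the branch group $G$, every finite-index subgroup contains a congruence-like subgroup built from rigid stabilizers, this reduces the separation of $g$ from $H$ to separation of their images in some finite quotient $G/\St_G(n)$, which can be arranged by enlarging $n$ exactly as in the finite case. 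The main obstacle is precisely the extraction of this structural statement from the proof of Theorem~\ref{mainthm}: one must verify that the copy of $G$ or $G\times G$ commensurable with $H$ actually arises from the tree action of $H$ inside $G$, not as an abstract isomorphism of some unrelated nature.

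Solvability of the membership problem then follows by a standard parallel-search argument. The Gupta--Sidki group has solvable word problem (via the contracting recursion on the tree), and its congruence quotients $G/\St_G(n)$ are effectively computable. To decide whether $g \in H$, I run simultaneously (i) an enumeration of words in the generators of $H$, testing each against $g$, and (ii) for increasing $n$, the check whether the image of $g$ in the finite group $G/\St_G(n)$ lies in the image of $H$. Subgroup separability ensures that (ii) terminates negatively whenever $g\notin H$, while (i) terminates positively whenever $g\in H$, so together they decide membership.
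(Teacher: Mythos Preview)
Your proposal correctly identifies the central obstacle and then does not resolve it. You note that abstract commensurability with $G$ or $G\times G$ is ``too weak'' and that one would need to know the copy of $G$ or $G\times G$ arises \emph{inside} $G$ from the tree action; you then hope this can be extracted from the proof of Theorem~\ref{mainthm}. It cannot, at least not in any direct way: Theorem~\ref{mainthm} is proved via Theorem~\ref{thm3}, an induction on word length that yields only the abstract commensurability statement, and there is no reason the finite-index subgroup of $H$ witnessing commensurability should contain any $\rist_G(v)$ or $\St_G(n)$. So the step ``$H$ virtually contains a naturally embedded branch-like subgroup'' is an unsupported assertion, and the argument stalls there.

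The paper avoids this difficulty by not trying to separate $H$ directly. Instead it applies Theorem~\ref{thm3} a second time, now to the class $\mathcal{S}$ of finitely generated subgroups all of whose finite-index subgroups are closed in the profinite topology of $G$. Properties~\ref{prop1} and~\ref{prop2} are immediate. For~\ref{prop3} one must show that if $H\leq\St(1)$ and each vertex section $H_i$ lies in $\mathcal{S}$, then $H\in\mathcal{S}$; this is where Theorem~\ref{mainthm} enters, but only through the consequence (Lemma~\ref{lemma12}) that every quotient of $H_i$ is residually finite. That residual finiteness lets one separate the kernel $L_0$ of the map $H_0\to H/\bigl((H_0\times1\times1)\cap\psi(H)\bigr)$ by finite-index subgroups of $H_0$, and since $H_0$ is already known to be in $\mathcal{S}$ these are closed in $G$, from which closedness of $\psi(H)$ in $G\times G\times G$ follows. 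The structural link to the tree is supplied by the inductive framework of Theorem~\ref{thm3} itself (via vertex sections), not by the commensurability isomorphism.

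Your treatment of finite $H$ and of the passage from subgroup separability to solvable membership problem is fine.
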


Recall that a group is \emph{subgroup separable} (or LERF) if each of its finitely generated subgroups is an intersection of subgroups of finite index. 
This condition is strong and is only known to hold in very special cases such as free groups, surface groups, polycyclic groups and some 3-manifold groups (see \cite{HallFreeLERF, ScottSurfaceLERF, MalcevPolycyclicLERF, 3manifoldLERF}).
It is related to the membership problem (or generalized word problem).
The membership problem for a finitely generated group $H$ is solvable if there is an algorithm which 
given a finitely generated subgroup $K\leq H$ and an element $h\in H$ determines whether or not $h\in K$.
If a group is subgroup separable and all its finite quotients can be effectively determined then it has solvable membership problem. 
The Gupta--Sidki $p$-groups, the Grigorchuk group and, more generally, groups with finite  $L$-presentations are examples of groups
all of whose finite quotients can be effectively determined (see, for instance, \cite{Hartung_Cosetenumeration}).

The proofs of our main theorems both rely on an auxiliary result,  \autoref{thm3}, on finitely generated subgroups of the Gupta--Sidki 3-group. 
It is worth mentioning that all our results hold for the general case where $p$ is any odd prime, 
except for the length reduction argument in this  \autoref{thm3}. 
For this reason, all definitions and preliminary results are stated for the general case in  \autoref{Defs} and we only focus on the case $p=3$ for the proof of \autoref{thm3} in \autoref{prelims}. 
We depart from preliminary results in  \autoref{max section}, where we discuss maximal subgroups. 
In \cite{PervovaGrig, PervovaGupta} it was shown that all maximal proper subgroups of the Grigorchuk and Gupta--Sidki groups have finite index. 
We will show in  \autoref{maxsugps} that this maximal subgroups property passes to all finitely generated subgroups of the Gupta--Sidki 3-group, as a consequence of \autoref{mainthm}.
The proofs of \autoref{mainthm} and \autoref{thm2} are presented in the final \autoref{pfthm1}. 
The arguments generalize easily to the case $p>3$ assuming that the analogue of \autoref{thm3} holds.
In this final section we also establish the following characterization of finite subgroups of the 3-group, 
using the analysis carried out in  \autoref{prelims}:

\begin{thm}\label{finitesubgps}
Let $H$ be a finitely generated subgroup of $G$. 
Then $H$ is finite if and only if no vertex section of $H$ is equal to $G$. 
\end{thm}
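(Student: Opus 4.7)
The forward direction is immediate: if $H$ is finite, then every vertex section $H_v$, being a homomorphic image of the stabiliser $\St_H(v)\le H$, is itself finite, and so cannot equal the infinite group $G$.

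For the converse I would argue the contrapositive and show that, if no vertex section of $H$ equals $G$, then $H$ is finite. The plan is an induction on a suitable notion of ``length'' of $H$---for instance the smallest integer $\ell$ such that $H$ admits a generating set all of whose elements have word length at most $\ell$ in the standard generators of $G$---built on the length reduction machinery worked out for \autoref{thm3} in \autoref{prelims}.

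For the inductive step, consider the level-$1$ stabiliser $H^{(1)}:=H\cap\St_G(1)$, which has index at most $3$ in $H$, together with the self-similar embedding $\psi\colon\St_G(1)\hookrightarrow G\times G\times G$. The image $\psi(H^{(1)})$ is contained in $H_0\times H_1\times H_2$, where $H_i\le G$ is the vertex section of $H$ at the $i$-th child of the root. Each $H_i$ is finitely generated and, by hypothesis, a proper subgroup of $G$; moreover any vertex section of $H_i$ at a vertex $w$ is exactly the vertex section of $H$ at $iw$, so none of them equals $G$ either. If each $H_i$ has strictly smaller length than $H$, then the induction hypothesis forces each $H_i$ to be finite, whence $\psi(H^{(1)})$ lies in a finite group and $H$ is finite. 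A small base case, covering subgroups generated by elements of length at most $1$, is handled by direct inspection.

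The main obstacle is precisely this reduction of length on first-level sections. Starting from a generating set of $H$ of maximum length $\ell$, passing to $H^{(1)}$ (by multiplying generators by suitable powers of the root-level element to land in $\St_G(1)$) and projecting through $\psi$ produces generating sets for the $H_i$ whose lengths must be shown to be strictly smaller than $\ell$. For individual elements such a reduction is a standard feature of the Gupta--Sidki contracting map, but at the level of subgroups, under the standing assumption that no vertex section of $H$ equals $G$, the argument is delicate: it is exactly the analysis of \autoref{prelims} that underlies \autoref{thm3}, and it is the reason the present theorem is restricted to the case $p=3$.
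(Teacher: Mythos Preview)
Your overall strategy---induction on the maximum generator length $D$, with the machinery of \autoref{prelims} supplying the reduction---is the same as the paper's. But the inductive step as you describe it has a genuine gap. You treat the passage to the first-level sections $H_0,H_1,H_2$ as a single move and hope each $H_i$ is generated by strictly shorter elements. When $H\le\St(1)$ this is fine: $H_i=\varphi_i(H)$ and \autoref{length} gives generator length at most $(D+1)/2<D$. When $H\nleq\St(1)$, however, one must first produce generators for $\St_H(1)$ via a Schreier transversal (not, as you write, by multiplying generators by powers of the root permutation---that does not yield a generating set for $\St_H(1)$). The resulting generators have length up to $3D$, and their first-level projections have length up to $(3D+1)/2$, which exceeds $D$. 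So the induction does not close at level one.

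The missing ingredient is precisely where the hypothesis ``no vertex section equals $G$'' enters, and it is used for more than just passing the hypothesis down to the $H_i$. Since $H\nleq\St(1)$ and no $H_u$ equals $G$, \autoref{roadmap} forces every first-level section $H_u$ into $\St(1)$, so that $\St_H(1)=\St_H(2)$. One may therefore project the Schreier generators of $\St_H(1)$ directly to the \emph{second} level: two applications of \autoref{length} give generator length at most $(3D+3)/4$, which is strictly less than $D$ once $D>3$. The boundary cases $D\le 3$ are then handled by \autoref{D=1}, \autoref{D=2}, \autoref{D=3}. This second-level descent, unlocked by \autoref{roadmap}, is the step your sketch omits; pointing to \autoref{prelims} in general is not enough, because your first-level formulation of the inductive step is the one that fails.
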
	
Minor changes in the proof of this theorem and the detailed analysis carried out in \cite{griwil} yield 
an identical characterization of the finite subgroups of the Grigorchuk group.
\begin{thm}\label{grigfinite}
Let $H$ be a finitely generated subgroup of the Grigorchuk group $\Gamma$. 
Then $H$ is finite if and only if  no vertex section of $H$  is equal to $\Gamma$. 
\end{thm}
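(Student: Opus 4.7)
The plan is to mirror the proof of \autoref{finitesubgps} for the Gupta--Sidki group, substituting the detailed analysis of \cite{griwil} for the length reduction carried out in \autoref{prelims}. The forward implication is elementary. If $H$ is finite, then for every vertex $v$ the section of $H$ at $v$ is the image of the stabilizer $\St_H(v)\leq H$ under the projection to the subtree rooted at $v$, hence a finite group, and so cannot equal the infinite group $\Gamma$.

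For the converse I would argue the contrapositive: if $H$ is infinite and finitely generated, then some vertex section of $H$ equals $\Gamma$. Fix a finite generating set $S$ of $H$ and a suitable length function on $\Gamma$ (for instance, word length with respect to the standard generators), and set $\ell(H,S)=\max_{s\in S}|s|$. The length-reduction machinery of \cite{griwil} furnishes the following dichotomy at each stage: either some vertex section of $H$ already equals $\Gamma$ (in which case we are done), or there is a finite-index subgroup $H'\leq H$ stabilising some finite level of the tree whose vertex sections can be generated by words of strictly smaller length. Iterating the second alternative produces a decreasing sequence of non-negative integers, so the process must terminate. If termination occurs with all the sections trivial, then $H'$ fixes a finite subtree pointwise and acts trivially below it, forcing $H$ to be finite and contradicting the assumption. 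Hence the process must stop because some vertex section is already equal to $\Gamma$.

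The main obstacle is that the arguments in \cite{griwil} are primarily aimed at the commensurability classification, so statements are typically given up to passage to subgroups of finite index. Here I need the sharper conclusion that some section is \emph{equal} to $\Gamma$, not merely commensurable with it. This is exactly the refinement that \autoref{prelims} carries out for the Gupta--Sidki group, and the corresponding refinement for $\Gamma$ requires a careful reinspection of the length-reducing substitutions in \cite{griwil} to track the precise image inside $\Gamma$ at each step rather than just its commensurability class. Once that bookkeeping is done the argument above goes through verbatim, yielding an identical characterisation of finite subgroups of $\Gamma$.
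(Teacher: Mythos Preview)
Your proposal is correct and follows essentially the same route as the paper: induction (equivalently, descent) on the maximum generator length, using the case analysis of \cite[Theorem 3]{griwil} to force length reduction in suitable vertex sections, and terminating at small length where the only possibilities are finite subgroups or $\Gamma$ itself. Two points where the paper is more explicit than your sketch: first, the paper organises the argument as a straightforward induction on $D$ proving ``no section equals $\Gamma$ $\Rightarrow$ $H$ finite'' rather than as an iterated descent on finite-index subgroups $H'\le H$---the length reduction happens in the \emph{vertex sections}, not in $H'$, so what is actually iterated is passage to deeper sections, and the crucial observation that every vertex section of $H_v$ is already a vertex section of $H$ is what makes the hypothesis persist; second, when $H\nleq\St_\Gamma(1)$ the length reduction from \cite{griwil} only applies to the sections below \emph{one} vertex, and the paper then uses transitivity of $H$ on the relevant layer to conjugate and conclude that all sections at that level are finite.
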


\section{Definitions and preliminaries}\label{Defs}

We begin by defining the trees on which our groups act, the automorphism groups of these trees, and some of their subgroups which will be used in our proofs. 

For an integer $d\geq 2$, we may think of the vertices of the \emph{$d$-regular rooted tree $T$} as finite words over the alphabet $\{0,\ldots, d-1\}$.
We think of the empty word $\varepsilon$ as the root. The words $u, v$ are joined by an edge if $v=uw$ (or $u=vw$) for some $w$ in the alphabet. 

We can impose a metric on $T$ by assigning unit length to each edge.
Then vertex $v$ will be at distance $n$ from vertex $u$ if the unique path joining them consists of $n$ edges.
The distance of a vertex $v$ from the root is the \emph{level of $v$}.
The set of vertices of level $n$ is called the \emph{$n$th layer} of $T$ and is denoted by $\mathcal{L}_n$.

For a vertex $v\in \mathcal{L}_n$, the subtree consisting of vertices of level $m\geq n$ separated from the root by $v$ is the \emph{subtree rooted at $v$} and it is denoted by $T_v$.
Since $T$ is regular, for every vertex $v$ there is an obvious isomorphism from $T$ to $T_v$ taking $u$ to $vu$.
We will identify all $T_v$ with $T$ in the rest of the paper.

An \emph{automorphism} of a rooted tree $T$ is a permutation of the vertices that preserves the adjacency relation.
We denote the group of all automorphisms of $T$ by $\Aut T$. 
For any vertex $v$ of $T$ write $v^g$ for the image of $v$ under $g\in \Aut T$. 
We write
$$\St(v):=\{g\in \Aut T \mid v^g=v\}$$
for the \emph{stabilizer} of $v$.
The subgroup $$\St(n):=\bigcap_{v\in\mathcal{L}_n} \St(v)$$
 is the $n$th \emph{level stabilizer}.
For any subgroup $\Gamma\leq \Aut T$,  denote by $\St_{\Gamma}(v)$ and $\St_{\Gamma}(n)$  
the intersection of $\Gamma$ with the above subgroups. 
The level stabilizers $\St_{\Gamma}(n)$ have finite index in $\Gamma$.
We say that $\Gamma$ has the \emph{congruence subgroup property} if every finite index subgroup of $\Gamma$ contains some $\St_{\Gamma}(n)$.

For every $x\in \St(v)$ there is a unique automorphism $x_v\in \Aut T$ which is simply the restriction of $x$ to the subtree $T_v(=T)$.
Hence for every subgroup $\Gamma\leq \Aut T$ and every vertex $v$ of $T$ this restriction yields a homomorphism
$$\varphi_v:\St_{\Gamma}(v)\rightarrow \Aut T, \quad x\mapsto x_v.$$
The image of this homomorphism is denoted by $\Gamma_v$ and called the \emph{vertex section} of $\Gamma$ at $v$
(some authors call it an upper companion group).
We may also refer to  $x_v=\varphi_v(x)$ as the vertex section of $x$ at $v$.
Observe that if $v=uw$ then $\varphi_v=\varphi_u \circ \varphi_w$.

Notice that, although the image $\varphi_v(\St_{\Gamma}(v))$ is a subgroup of $\Aut T$, it may not be a subgroup of $G$.
We say that a subgroup $\Gamma\leq \Aut T$ is \emph{fractal} if $\varphi_v(\St_{\Gamma}(v))=\Gamma$ for every vertex $v$ of $T$.

For every $n$, define 
\begin{align*}
\psi_n:\St_{\Gamma}(n) &\rightarrow \prod_{v\in\mathcal{L}_n} \psi_v(\St_{\Gamma}(v)) \leq \prod_{v\in\mathcal{L}_n} \Aut T_n\\
 x&\mapsto (x_v)_{v\in\mathcal{L}_n}.
\end{align*}
This is an embedding, so we may  identify elements of $\St_{\Gamma}(n)$ with their image under $\psi_n$.
For the first level we usually omit the subscript and just write $\psi$.
In fact, we often (following the custom in the literature) omit the $\psi$ altogether for elements of the first level stabilizer.

We can now introduce the family of Gupta--Sidki $p$-groups.
Let $p>2$ be a prime and let $T$ be the $p$-regular rooted tree.
The \emph{Gupta--Sidki $p$-group} $G=\langle a , b \rangle$ is the subgroup of $\Aut T$ generated by two automorphisms $a$ and $b$.
The automorphism $a$ cyclically permutes the first layer as $(1\; 2\; \ldots\; p)$.
The element $b=(a,a^{-1},1,\ldots,1,b)$ is recursively defined by its action on subtrees rooted at the first layer: it acts on $T_0$ as $a$ acts on $T$, as $a^{-1}$ on $T_1$, as $b$ on $T_{p-1}$ and trivially on the other subtrees. 
The figure below shows the action of  $b$ on $T$ for $p=3$:

\begin{center}
\tikzset{
  on each segment/.style={
    decorate,
    decoration={
      show path construction,
      moveto code={},
      lineto code={
        \path [#1]
        (\tikzinputsegmentfirst) -- (\tikzinputsegmentlast);
      },
      curveto code={
        \path [#1] (\tikzinputsegmentfirst)
        .. controls
        (\tikzinputsegmentsupporta) and (\tikzinputsegmentsupportb)
        ..
        (\tikzinputsegmentlast);
      },
      closepath code={
        \path [#1]
        (\tikzinputsegmentfirst) -- (\tikzinputsegmentlast);
      },
    },
  },
  mid arrow/.style={postaction={decorate,decoration={
        markings,
        mark=at position .5 with {\arrow[#1]{stealth}}
      }}},
}

\begin{tikzpicture}
	\tikzstyle{every node}=[fill=black,circle,inner sep=0pt, minimum size=3pt]

	\tikzstyle{level 1}=[sibling distance=38mm]
	\tikzstyle{level 2}=[sibling distance=17mm]
	\tikzstyle{level 3}=[sibling distance=7mm]

	\node[circle, draw] (root) {}
	child {
		node[circle,draw, label=left:\textcolor{red}{$a$}] (level0start) {}
		child { node[circle, draw, red] (11) {} }
		child { node[circle, draw, red, label=below:$\vdots$] (12) {} }
		child { node[circle, draw, red] (13) {} }
	}
	child {
		node[circle,draw, label=right:\textcolor{red}{$a^{-1}$}] (level0middle) {}
		child { node[circle, draw, red] (21) {} }
		child { node[circle, draw, red, label=below:$\vdots$] (22) {} }
		child { node[circle, draw, red] (23) {} }
	}
	child {
		node[circle,draw, label=right:\textcolor{red}{$b$}] (level0end) {}
		child { node[circle, draw, label=right:\textcolor{red}{{\small$a$}}] {} 
			child { node[circle, draw, red] (311) {} }
			child { node[circle, draw, red, label=below:$\vdots$] (312) {} }
			child { node[circle, draw, red] (313) {} } 
			}
		child { node[circle, draw, label=right:\textcolor{red}{{\small$a^{-1}$}}] {} 
			child { node[circle, draw, red] (321) {} }
			child { node[circle, draw, red, label=below:$\vdots$] (322) {} }
			child { node[circle, draw, red] (323) {} }
			}
	child { node[circle, draw, label=right:\textcolor{red}{{\small$b$}}] {} 
			child { node[circle, draw, label=right:\textcolor{red}{{\footnotesize$a$}}] (331) {} }
			child { node[circle, draw, label=right:\textcolor{red}{{\footnotesize$a^{-1}$}},label=below:$\vdots$] (332) {} }
			child { node[circle, draw, label=right:\textcolor{red}{{\footnotesize$b$}}] (333) {} } 
			}
	};

	\path[draw=red,thick,dashed, postaction={on each segment={mid arrow=red}}]
   
(11) to[bend right] (12) to[bend right] (13) to[bend right] (11)
(23) to[bend left] (22) to[bend left] (21) to[bend left] (23)
(311) to[bend right] (312) to[bend right] (313) to[bend right] (311)
(323) to[bend left] (322) to[bend left] (321) to[bend left] (323);

\end{tikzpicture}

\end{center}


We establish some preliminary results and properties of the Gupta--Sidki $p$-groups that will be useful later on. \\

\noindent \textbf{Notation.}
From now on, $G$ will usually denote the Gupta--Sidki $p$-group unless otherwise stated, and we may omit the subscript in $\St_G(n)$, $\St_G(v)$.
The derived subgroup of $G$ will be denoted by $G'$ and the direct product of $i$ copies of $G$ by $G^{(\times i)}$. \\

It is immediate from the definition of $G$ as a subgroup of $\Aut T$ that $G$ is residually finite
(as $\bigcap_{n=1}^{\infty}\St_G(n)=1$). 
It follows from \cite{GS_embeddingpgroups} that $G$ is \emph{just infinite}; that is, $G$ is infinite but all its proper quotients are finite.

For $i=1,\ldots,p-1$, we write 
$$b_i:=b^{a^i}=(1,\ldots,b,a,a^{-1},\ldots,1)$$ 
where $b$ is in the $i$th co-ordinate and $b_0:=b$.
Let $B$ denote $\langle b^G \rangle$, the normal closure of $b$. 
Then it is easy to see that $B=\langle b^G \rangle$ is equal to $\langle b,b_1,\ldots,b_{p-1}\rangle=\St_G(1)$.
Note also that $b$ has order $p$.

We include the proof of the following to illustrate the use of projection arguments since these will play an important role later on.

\begin{prop}[\cite{guptasidki}]\label{subdirect}
For every $n$, the $n$th level stabilizer $\St_G(n)$ is a subdirect product of $p^n$ copies of $G$.
Therefore $G$ is infinite and  fractal.
\end{prop}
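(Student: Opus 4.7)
The plan is to induct on $n$, proving simultaneously that $\psi_n(\St(n))$ lies in $G^{p^n}$ and that each of the $p^n$ coordinate projections is surjective onto $G$. Infinity and fractality of $G$ then follow as corollaries.

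For the base case $n=1$, I use the description $\St(1)=\langle b,b_1,\dots,b_{p-1}\rangle$ recorded just before the proposition. A direct computation with conjugation by powers of $a$ shows that $\psi(b_i)$ is the cyclic shift of $\psi(b)=(a,a^{-1},1,\dots,1,b)$ by $i$ positions, so every entry of every generator lies in $\{1,a,a^{-1},b\}\subseteq G$, placing $\psi(\St(1))$ inside $G^p$. Fixing a coordinate $j$ and varying $i$ among the $b_i$'s produces $a$, $a^{-1}$ and $b$ at position $j$, so the projection of $\psi(\St(1))$ onto the $j$-th factor contains $\langle a,b\rangle=G$, yielding the subdirect structure at level $1$.

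For the inductive step, note that $x$ lies in $\St(n+1)$ exactly when $x\in\St(1)$ and each first-level section $x_i$ lies in $\St_G(n)$; hence $\psi(\St(n+1))=\psi(\St(1))\cap\St_G(n)^p$, and applying the inductive hypothesis in each factor places $\psi_{n+1}(\St(n+1))$ inside $G^{p^{n+1}}$. To see that a coordinate $v=iw\in\mathcal{L}_{n+1}$ surjects onto $G$, take $g\in G$: the inductive hypothesis produces $y\in\St_G(n)$ with $y_w=g$, and the base case produces $z\in\St(1)$ with $z_i=y$. The remaining sections $z_j$ need not lie in $\St_G(n)$, so I multiply $z$ by a suitable element of $\St(1)$ which is trivial at coordinate $i$ and whose other coordinates correct each $z_j$ into $\St_G(n)$.

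The coordinate-correction just sketched is the delicate point I expect to be the main obstacle: I must show that $\ker\pi_i\cap\psi(\St(1))$ realises every configuration of cosets in $(G/\St_G(n))^{p-1}$. The starting input for this is that the image of the mod-$\St_G(n)$ reduction of $\psi(\St(1))$ is constrained only by the single ``sum of cosets" relation coming from $\overline{\psi}(b)$ being a cyclic shift of $(1,-1,0,\dots,0)$, and this obstruction is automatic once coordinate $i$ already has class $0$. Once the induction is complete, the infinity of $G$ is immediate from the base case, since $[G:\St(1)]=p$ is incompatible with a surjection $\St(1)\twoheadrightarrow G$ when $G$ is finite; and fractality follows from $G=\varphi_v(\St(n))\subseteq\varphi_v(\St(v))\subseteq G$, the last inclusion coming from a parallel level-by-level check that the generators of $\St(v)$ all have $\varphi_v$-images in $G$.
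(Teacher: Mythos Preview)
Your base case and overall inductive scheme match the paper's: both examine the coordinates of $b_0,\ldots,b_{p-1}$ to see that each $\varphi_i$ surjects $\St(1)$ onto $G$, and both reduce the step from $n$ to $n+1$ to the claim that $\St(n+1)$ is subdirect in $\St(n)^p$, i.e.\ $\varphi_i(\St(n+1))=\St(n)$. The paper is deliberately terse here (``a similar argument as for $\St(1)$''), since the result is attributed to \cite{guptasidki} and the proof is offered only to illustrate projection arguments. You go further and correctly isolate this as the crux, phrasing it as a coordinate-correction: given $z\in\St(1)$ with $z_i\in\St(n)$, modify the other coordinates into $\St(n)$ while keeping $z_i$ fixed.

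There is, however, a genuine gap in your resolution. You assert that the image of $\psi(\St(1))$ in $(G/\St_G(n))^p$ is ``constrained only by the single sum-of-cosets relation'' coming from the cyclic shifts of $(1,-1,0,\ldots,0)$. That description is correct only for $n=1$, where $G/\St(1)\cong\mathbb{Z}/p$ and the image is exactly the sum-zero hyperplane; your correction then works. For $n\ge 2$ the quotient $G/\St(n)$ is a non-abelian $p$-group of order $>p$, the notion of ``sum of cosets'' has no meaning, and the image of $\psi(\St(1))$ in $(G/\St(n))^p$ has index strictly greater than $p$---so it is \emph{not} cut out by a single relation. Consequently the one-shot correction from $\St(1)$ all the way into $\St(n+1)$ cannot be justified as you describe. (Note also that you overstate what you need: $\ker\pi_i\cap\psi(\St(1))$ never realises \emph{every} configuration in $(G/\St(n))^{p-1}$, even for $n=1$; only those satisfying the constraint inherited from $z\in\St(1)$ are required, and are attainable.)

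What actually closes the gap is the containment $\psi(\St(1))\supseteq (G')^{\times p}\supseteq \St(2)^{\times p}$, which the paper establishes just afterwards in \autoref{facts}\ref{B'} and \autoref{G'>St2}. Once $\St(2)^{\times p}\subseteq\psi(\St(1))$, one has $\St(n)^{\times p}\subseteq\psi(\St(1))$ for every $n\ge 2$, whence $\psi(\St(n+1))=\psi(\St(1))\cap\St(n)^{\times p}=\St(n)^{\times p}$ and $\varphi_i(\St(n+1))=\St(n)$ immediately; your $n=1$ argument (or an explicit check with products like $b_0b_1\cdots b_{p-1}$) then handles the remaining case $\varphi_i(\St(2))=\St(1)$. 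Finally, note that the two corollaries you draw---infinity and fractality---already follow from the base case alone (fractality needs only $\varphi_v(\St(v))=G$, not $\varphi_v(\St(n))=G$), so they are unaffected by the gap.
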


\begin{proof}
We proceed by induction.
 For $n=1$ the map $\psi:\St_G(1)\rightarrow G^{(\times p)}$ is a homomorphism.
We claim that $\varphi_i(\St_G(1))=G$ for $i\in\{0,\ldots,p-1\}$. 
This is easily seen by examining the images of generators of $B$:

\begin{center}
	\begin{tabular}{c}
		$\psi(b)=(a,a^{-1},1,\ldots,b)$,\\
		$\psi(b_1)=(b,a,a^{-1},1,\ldots,1)$, \\
		\vdots \\
		$\psi(b_{p-1})=(a^{-1},1,\ldots,1,b,a)$.
	\end{tabular}
\end{center}

 Suppose that the result is true for $n\geq1$.
 Then a similar argument as for $\St_G(1)$ shows that $\St_G(n+1)$ is a subdirect product of $p$ copies of $\St_G(n)$, each of them a subdirect product of $p^n$ copies of $G$.
This immediately shows that $G$ is fractal and infinite.
\end{proof}

The following are easy generalizations to $p>3$ of  results proved in \cite{sidkisubgroups} for $p=3$.

\begin{prop}\label{facts}
We have
\begin{enumerate}[label={\rm(\roman*)}]
\item \label{abelianise} $G/G'=\langle aG', bG'\rangle \cong C_p\times C_p$;
\item\label{B'} $\psi(B')=(G')^{(\times p)}$;
\item\label{wreath} $G/B'\cong C_p\wr C_p$.
\end{enumerate}
\end{prop}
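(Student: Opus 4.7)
The plan is to prove the three parts in the order (i), (ii), (iii), each relying on the previous ones.

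For (i), the group $G/G'$ is a quotient of $C_p \times C_p$ since $a, b$ are generators of order $p$, so $|G/G'| \le p^2$. For the matching lower bound, I would examine the finite quotient $G/\St(2)$. We have $G/\St(1) \cong C_p$, and the map $\St(1)/\St(2) \hookrightarrow (G/\St(1))^p = C_p^p$, obtained by composing $\psi$ with the level-$1$ quotient in each coordinate, is injective and identifies $\St(1)/\St(2)$ with the augmentation kernel $\{x \in C_p^p : \sum x_i = 0\} \cong C_p^{p-1}$: the image of $b_i$ is the cyclic shift of $(1, -1, 0, \ldots, 0)$ read off from $\psi(b_i)$, and the $p$ such shifts span the kernel. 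The extension $1 \to \St(1)/\St(2) \to G/\St(2) \to C_p \to 1$ splits via $a$, and conjugation by $a$ acts by cyclic shift of coordinates, so $G/\St(2) \cong C_p \ltimes C_p^{p-1}$. Since the coinvariants of the cyclic shift on the augmentation kernel are one-dimensional, $(G/\St(2))^{\mathrm{ab}} \cong C_p \times C_p$, forcing $|G/G'| \geq p^2$ and hence equality.

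For (ii), the inclusion $\psi(B') \leq (G')^p$ is immediate, since $\psi|_B$ is a homomorphism into $G^p$ and commutators in $B$ map to coordinate-wise commutators. For the reverse inclusion, I would exhibit an element of $B'$ whose $\psi$-image has single-coordinate support $(c, 1, \ldots, 1)$ with $c$ conjugate to $[a, b]$. For $p = 3$ the starting commutators $\psi([b_0, b_1]) = (1, [a^{-1}, b], [b, a])$ and $\psi([b_0, b_2]) = ([a, b], 1, [b, a^{-1}])$ already have one trivial coordinate, and iterated commutation together with multiplication by $a$-conjugates reduces the support to a single coordinate. Since $\psi(B')$ is normal in $\psi(B)$, which surjects onto $G$ in each coordinate by \autoref{subdirect}, the $\psi(B)$-normal closure of $(c, 1, \ldots, 1)$ equals $(G', 1, \ldots, 1)$ (using that $[a, b]$ generates $G'$ as a normal subgroup of $G$). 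Applying $a$-conjugation cyclically shifts the coordinate, so $(G', 1, \ldots, 1)$ appears in every position, and their product is all of $(G')^p$.

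For (iii), the extension $1 \to B \to G \to G/B \cong C_p \to 1$ splits via $a$, whose conjugation cyclically permutes $b_0, \ldots, b_{p-1}$. Modding out by $B'$, the abelian group $B/B'$ is generated by $p$ elements $\bar b_i$ of order dividing $p$, so $|B/B'| \leq p^p$. Using (ii) and (i), $\psi$ induces an embedding $B/B' \hookrightarrow G^p/(G')^p \cong (C_p \times C_p)^p$. The projections of the images of $b_0, \ldots, b_{p-1}$ onto the $\bar b$-components are $p$ distinct standard basis vectors of $C_p^p$, because each $b_i$ has exactly one section equal to $b$ and the position varies with $i$; hence these images are linearly independent in $(C_p \times C_p)^p$. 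So $B/B' \cong C_p^p$, and combined with the $C_p$-action by cyclic permutation of generators, $G/B' \cong C_p \wr C_p$. The main obstacle is the reverse inclusion in (ii): producing elements of $B'$ with single-coordinate $\psi$-support requires careful commutator manipulations exploiting the branch structure of $G$ over $G'$, going beyond mere subdirectness of $\psi(B)$ in $G^p$; the rest of the argument is organizational.
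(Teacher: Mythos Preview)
Your framework for all three parts is sound and matches the paper's in spirit, but part~(ii) --- which you rightly identify as the crux --- has a genuine gap. The statement is for arbitrary odd primes $p$, yet your reverse-inclusion argument is only sketched for $p=3$: you note that $[b_0,b_1]$ and $[b_0,b_2]$ have one trivial coordinate and assert that ``iterated commutation together with multiplication by $a$-conjugates'' reduces to single-coordinate support, without carrying this out. For general $p$ the commutators $[b_i,b_j]$ need not have any trivial coordinate at all (typically $p-3$ coordinates are trivial, which for $p>3$ still leaves several nontrivial ones), and no inductive scheme is offered. Even for $p=3$ the reduction step is left as a promise rather than a computation.

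The paper closes this gap with a single explicit element: one checks directly that
\[
\psi\bigl([b_0b_1,\,b_1^{-1}b_{p-1}]\bigr)=(1,\ldots,1,[b,a])
\]
for every odd prime $p$, because in all but the last coordinate the two factors of the commutator lie in $\langle a\rangle$ or $\langle b\rangle$ and hence commute. Conjugating by powers of $a$ then places $[b,a]$ in any desired coordinate, and your normal-closure argument (which is correct) finishes the job. So the missing ingredient is not structural but computational: you need one well-chosen commutator in $B'$ that works uniformly in $p$, and the paper supplies exactly that.

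Your treatments of (i) and (iii) are more detailed than the paper's (which dispatches (i) as a ``straightforward verification'' and reads (iii) off immediately from (ii)), and they are correct as written.
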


\begin{proof}
The first item is a straightforward verification.
For the second,we have 
\begin{align*}
&\psi([b_0b_1,b_1^{-1}b_{p-1}]) \\
			&=[(a,a^{-1},1,\ldots,b)(b,a,a^{-1},\ldots,1), (b^{-1},a^{-1},a,\ldots,1)(a^{-1},1,\ldots,b,a)]\\
			&=([ab,b^{-1}a^{-1}],[1,a^{-1}],[a^{-1},a],\ldots,[1,b],[b,a])\\
			&=(1,\ldots,1,[b,a])
\end{align*}
and it is easy to see that, for $i=1,\ldots,p-1$,
$$\psi([b_0b_1,b_1^{-1}b_{p-1}]^{a^i})=\psi([b_ib_{i+1},b_{i+1}^{-1}b_{p+i-1}]) =(1,\ldots,[b,a],\ldots,1)$$
where $[b,a]$ is in the $i$th coordinate.

From the above we obtain 
$$G/B'=G/(G')^{(\times p)}=\langle a\rangle B/(G')^{(\times p)}\cong C_p\wr C_p.$$
\end{proof}

We are now able to show a commensurability property, which will be used in the next sections without mention.
\begin{prop}\label{comm}
For every odd prime $p$, if $i\equiv j \mod p-1$ then $G^{(\times i)}$ is commensurable with $G^{(\times j)}$.
\end{prop}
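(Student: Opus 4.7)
The plan is to reduce the statement to showing the single commensurability $G \sim G^{(\times p)}$ (where $\sim$ denotes commensurability), since raising both sides to a direct product by $G^{(\times (i-1))}$ and iterating will handle the general case.

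First I would record the fact that commensurability is preserved under direct products: if $A \sim B$ via finite-index subgroups $A_0 \leq A$, $B_0 \leq B$ with $A_0 \cong B_0$, and $C \sim D$ similarly, then $A_0 \times C_0 \cong B_0 \times D_0$ has finite index in $A \times C$ and in $B \times D$ respectively. Given this, once $G \sim G^{(\times p)}$ is established, one obtains $G^{(\times i)} \sim G^{(\times i)} \times G^{(\times (p-1))} = G^{(\times (i+p-1))}$ for every $i \geq 1$. Iterating and using symmetry of commensurability, $G^{(\times i)} \sim G^{(\times (i + k(p-1))})$ for every $k \geq 0$, which covers precisely the congruence $i \equiv j \pmod{p-1}$.

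The heart of the argument is then $G \sim G^{(\times p)}$, which follows directly from \autoref{facts}. By part \ref{wreath}, the quotient $G/B' \cong C_p \wr C_p$ is finite, so $B'$ is a finite-index subgroup of $G$. By part \ref{B'}, the embedding $\psi$ identifies $B'$ with $(G')^{(\times p)}$, so $B' \cong (G')^{(\times p)}$ as abstract groups. By part \ref{abelianise}, $G'$ has index $p^2$ in $G$, so $G' \sim G$, and hence $(G')^{(\times p)} \sim G^{(\times p)}$ by the direct-product remark. Chaining these commensurabilities yields $G \sim B' \cong (G')^{(\times p)} \sim G^{(\times p)}$, as required.

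There is no serious obstacle: everything needed is already packaged in \autoref{subdirect} and \autoref{facts}. The only conceptual point is the stability of commensurability under direct products, which I would either state as a one-line lemma or invoke directly.
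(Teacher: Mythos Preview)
Your proposal is correct and follows essentially the same route as the paper: both arguments reduce to the single commensurability $G \sim G^{(\times p)}$, extract it from \autoref{facts} via the chain $G \sim B' \cong (G')^{(\times p)} \sim G^{(\times p)}$, and then propagate it by taking direct products with further copies of $G$ and iterating. The paper phrases the iteration as an induction on $n$ for $G^{(\times i)} \sim G^{(\times n(p-1)+i)}$, but this is exactly your observation that $G^{(\times i)} \sim G^{(\times i+p-1)}$ applied repeatedly.
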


\begin{proof}
For fixed $i\in\{0,\ldots,p-1\}$, we show by induction on $n$ that $G^{(\times i)}$ is commensurable with $G^{(\times n(p-1)+i)}$. 
For the base case, we deduce from parts \ref{B'} and \ref{wreath} of \autoref{facts} that $G^{(\times p-1+i)}=G^{(\times p)}\times G^{(\times i-1)}$ is commensurable with  $G\times G^{(\times i-1)}= G^{(\times i)}$.
Now suppose the claim holds for $n$.
Then 
$G^{(\times (n+1)(p-1)+i)} = G^{(\times n(p-1) + i)}\times G^{(\times p-1)}$ is commensurable with $G^{(\times i)}\times G^{(\times p-1)}$, which is in turn commensurable with $G^{(\times i-1)}\times G=G^{(\times i)}$.
Hence $G^{(\times (n+1)(p-1)+i)}$ is commensurable with $G^{(\times i)}$ and our claim follows by induction.
\end{proof}

The following will be very useful.

\begin{prop}\label{2.2.2}
An element $g\in G$ is in $B$ if and only if there exist  $n_0,\ldots,n_{p-1}\in \{0,\ldots,p-1\}$ and  $c_0,\ldots,c_{p-1}\in G'$ such that
$$g=(a^{-n_{p-1}+n_0}b^{n_1}c_0, a^{-n_0+n_1}b^{n_2}c_1,\ldots, a^{-n_{p-2}+n_{p-1}}b^{n_0}c_{p-1}).$$
Moreover, when this is the case, the representation is unique.
\end{prop}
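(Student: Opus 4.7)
The plan is to exploit the generation $B=\St(1)=\langle b_0,b_1,\ldots,b_{p-1}\rangle$ together with the facts collected in \autoref{facts}. The key observation is that $\psi(B')=(G')^{(\times p)}$ by part~\ref{B'}, so $B/B'$ embeds via $\psi$ into $(G/G')^{(\times p)}\cong (C_p\times C_p)^p$ using part~\ref{abelianise}; in particular $B/B'$ is an elementary abelian $p$-group. Since each $b_j$ has order $p$, every $g\in B$ admits an expression $g=b_0^{n_0}b_1^{n_1}\cdots b_{p-1}^{n_{p-1}}\cdot r$ with $n_j\in\{0,\ldots,p-1\}$ and $r\in B'$.

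For existence of the claimed form, I would compute $\psi(b_0^{n_0}\cdots b_{p-1}^{n_{p-1}})$ coordinate by coordinate modulo $(G')^{(\times p)}$. Reading off from the table in the proof of \autoref{subdirect}, $\psi(b_j)$ has $a$ in coordinate $j$, $a^{-1}$ in coordinate $j+1$, $b$ in coordinate $j-1$ (all indices mod $p$), and $1$ elsewhere. Since $G/G'$ is abelian, the contributions from the different factors $b_j^{n_j}$ to a given coordinate commute modulo $G'$, and the $i$th coordinate reduces to $a^{n_i}\cdot a^{-n_{i-1}}\cdot b^{n_{i+1}}=a^{-n_{i-1}+n_i}b^{n_{i+1}}$ modulo $G'$. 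Multiplying by $\psi(r)\in(G')^{(\times p)}$ then absorbs an arbitrary $c_i\in G'$ into the $i$th slot, producing exactly the expression in the statement. Conversely, any tuple of this shape lies in $\psi(b_0^{n_0}\cdots b_{p-1}^{n_{p-1}})\cdot(G')^{(\times p)}\subseteq\psi(B)$, so its preimage under $\psi$ belongs to $B$.

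For uniqueness, I would project each coordinate onto $G/G'=\langle a\rangle\times\langle b\rangle$: the $b$-exponent of the $i$th coordinate reads $n_{i+1}\bmod p$, and since $n_{i+1}\in\{0,\ldots,p-1\}$ this recovers each exponent as an honest integer. Once the $n_j$ are fixed, each $c_i\in G'$ is forced by the corresponding coordinate of $\psi(g)$. The main obstacle is essentially clerical: one must track the cyclic index shift coming from conjugation by $a$, so that $b$ sits in coordinate $j-1$ (not $j$) of $\psi(b_j)$ and the $b$-exponent of coordinate $i$ of $\psi(g)$ is therefore $n_{i+1}$, not $n_i$. Once this shift is correctly handled the verification becomes a routine abelianisation computation.
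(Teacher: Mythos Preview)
Your argument is correct and follows essentially the same route as the paper's proof: write $g\equiv b_0^{n_0}\cdots b_{p-1}^{n_{p-1}}\pmod{B'}$, use $\psi(B')=(G')^{(\times p)}$, and read off the coordinates modulo $G'$. The only minor difference is that for uniqueness the paper implicitly invokes $B/B'\cong C_p^{\,p}$ (from \autoref{facts}\ref{wreath}) to say the $n_i$ are determined, whereas you recover each $n_{i+1}$ directly from the $b$-exponent of the $i$th coordinate in $G/G'$; both amount to the same observation.
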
	

\begin{proof}
The `if' direction is obvious.
If $g\in B$ 
then $g$ is a product of $b_0,\ldots,b_{p-1}$ and
 $$B/B'=\{b_0^{r_0}b_1^{r_1}\cdots b_{p-1}^{r_{p-1}}B' \mid r_i\in \{0,\ldots,p-1\}\}.$$ 
Thus  for some  $n_i\in\{0,\ldots,p-1\},\, c\in B'$ we have 
\begin{align*}
g &=b_0^{n_0}b_1^{n_1}\cdots b_{p-1}^{n_{p-1}}c \\
	&= (a^{n_0}b^{n_1}a^{-n_{p-1}}d_0, a^{-n_0+n_1}b^{n_2}d_1,\ldots,b^{n_0}a^{-n_{p-2}+n_{p-1}}d_{p-1}) \\
	&= (a^{n_0}a^{-n_{p-1}}b^{n_1}c_0, a^{-n_0+n_1}b^{n_2}c_1,\ldots,a^{-n_{p-2}+n_{p-1}}b^{n_0}c_{p-1}) 
\end{align*}
where $c_i,d_i\in G'$ (since $B'=(G')^{(\times p)}$). 

The $n_i$ are uniquely determined and therefore so are the $c_i$. 
\end{proof}

\begin{lem}\label{G'>St2} 
The following assertions hold:
 \begin{enumerate}[label=(\roman*)]
  \item $b^{(1)}=(b,b,\ldots,b)\in G';$
  \item $G'\geq \St(2)$.
 \end{enumerate}
\end{lem}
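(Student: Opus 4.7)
My plan addresses both parts using \autoref{facts} and \autoref{2.2.2}. The two parts are closely linked, but I will prove (i) first by explicit construction and then (ii) by a structural argument.

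For (i), the element $b^{(1)}$ is a priori just a tuple in $G^{(\times p)}$, so I need to (a) produce an element of $\St(1)$ that $\psi$ sends to $(b,\ldots,b)$ and (b) show this element lies in $G'$. I would take $w := b_0 b_1 \cdots b_{p-1}$. First, since each $b_i = b^{a^i}$ is congruent to $b$ modulo $G'$ by \autoref{facts}(i), and $b^p = 1$ in $G$, one has $w \in G'$. Second, $\psi(w)$ can be read off coordinate-by-coordinate: for each fixed coordinate $j$, exactly one $b_i$ contributes $a$ (namely $b_j$), exactly one contributes $a^{-1}$ (namely $b_{j-1}$), exactly one contributes $b$ (namely $b_{j+1}$), and the other $p-3$ contribute $1$. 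Hence each coordinate is congruent to $b$ modulo $G'$, and $\psi(w) \equiv (b, b, \ldots, b) \pmod{(G')^{(\times p)}}$. By \autoref{facts}(ii), $(G')^{(\times p)} = \psi(B')$, so there exists $y \in B'$ with $\psi(w y^{-1}) = (b, \ldots, b)$, which realises $b^{(1)}$ as an element of $\St(1)$. Since $w \in G'$ and $B' \leq G'$, we obtain $b^{(1)} \in G'$.

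For (ii), observe that $\St(2) \leq \St(1) = B$, so every $g \in \St(2)$ has the unique form from \autoref{2.2.2}. Membership in $\St(2)$ means that each vertex section of $g$ fixes level one of its subtree, i.e., lies in $\St_G(1) = B$; equivalently, each coordinate of $\psi(g)$ has $a$-exponent zero modulo $p$. From the formula in \autoref{2.2.2}, the $j$th coordinate is $a^{-n_{j-1}+n_j} b^{n_{j+1}} c_j$, so the condition forces $n_{j-1} \equiv n_j \pmod{p}$ for every $j$, whence all $n_i$ are equal to some common value $n$. Writing $g = b_0^n b_1^n \cdots b_{p-1}^n \cdot c$ with $c \in B' \leq G'$ and projecting to $G/G' \cong C_p \times C_p$, we get $gG' = (bG')^{np} = G'$ because $b^p = 1$; hence $g \in G'$.

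No step is difficult in isolation. The only care points are realising $(b,\ldots,b)$ as an element of $G$ via the coordinate computation of $\psi(w)$, and correctly extracting the constraint $n_{j-1} = n_j$ from \autoref{2.2.2}; once those are in place, the abelianization step finishes both arguments.
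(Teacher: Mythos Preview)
Your proof is correct and follows essentially the same strategy as the paper: for (i) both you and the paper take $w=b_0b_1\cdots b_{p-1}$, show $\psi(w)\equiv(b,\ldots,b)\pmod{B'}$, and show $w\in G'$ (you via the abelianisation $b_i\equiv b\pmod{G'}$ so $w\equiv b^p=1$, the paper via the explicit identity $b_i=b[b,a^i]$); for (ii) both use \autoref{2.2.2} to force all $n_i$ equal and then reduce to part (i). The only cosmetic difference is that in (ii) you finish by computing $gG'=(bG')^{np}=G'$ directly, whereas the paper phrases this as $g\equiv (b^{(1)})^n\pmod{B'}$ and invokes (i); these are the same argument.
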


\begin{proof}
  For the first item note that $bb_1\cdots b_{p-1}\equiv b^{(1)} \mod B'$ because
  $bb_1\cdots b_{p-1}=(aba^{-1},b,\ldots,b)$.
  Therefore it suffices to show that $bb_1\cdots b_{p-1} \in G'$.
  To see this, observe that for $i=0,\ldots, p-1$ we have $b_i=b[b,a^i]$ so that
  $$bb_1\cdots b_{p-1}= b^2[b,a]b^{-2} b^3[b,a^2]b^{-3}\cdots b^{-1}[b,a^{-2}]b[b,a^{-1}] \in G',$$
  as required.
  
  For the second item, let $g=(g_0,\ldots,g_{p-1})\in \St(2)$, so $g_i\in \St(1)$ for each $i$. 
  \autoref{2.2.2} implies  that $g=(b^nc_0,\ldots b^nc_{p-1})\equiv (b^{(1)})^n \mod B'$ for some $n$. 
  Thus $g\in G'$ by the previous part.
\end{proof}

The proofs of \autoref{mainthm} and \autoref{thm2} use the fact that the Gupta--Sidki 3-group has the congruence subgroup property. 
This is stated in \cite[Proposition 8.4]{BartCongruence},
but the proof given there is not quite clear.
Here is a proof which works for any odd prime $p$. 
\begin{prop}\label{CSP}
The Gupta--Sidki $p$-group $G$ has the congruence subgroup property. 
\end{prop}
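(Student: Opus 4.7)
The plan is to exploit the fact that $G$ is a \emph{regular branch group over} its derived subgroup $G'$ and to reduce the statement to the assertion that every finite-index normal subgroup of $G$ contains some rigid stabilizer --- the standard route for establishing CSP in the branch setting. The branch data I would use are assembled from the earlier results: by \autoref{facts}\ref{abelianise}--\ref{B'}, $[G:G']=p^2$ and $\psi(B')=(G')^{\times p}$ with $B'\leq G'\cap\St(1)$; by \autoref{G'>St2}, $\St(2)\leq G'$. Iterating these, the rigid level stabilizer $\rist_G(n)$ contains, via $\psi_n$, a full direct product $(G')^{\times p^n}$ spread across the $n$-th level of the tree.

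Since every finite-index $H\leq G$ contains its normal core $\Core_G(H)$, which is itself of finite index in $G$, it is enough to treat the case of a normal $N\trianglelefteq G$ of finite index. Furthermore, once one knows $N$ contains some rigid stabilizer $\rist_G(n)$, one immediately obtains $N\supseteq \St(n+2)$: indeed $\rist_G(n)$ contains $(G')^{\times p^n}$ via $\psi_n$, and by \autoref{G'>St2} this direct product contains $\psi_n(\St(n+2))$. Thus the task reduces to showing that every finite-index normal $N\trianglelefteq G$ contains some $\rist_G(n)$.

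I would proceed by strong induction on $[G:N]$. In the inductive step, I pass from $N$ to its ``upper companion''
\[
N^{(1)} := \varphi_0(N\cap\St(1))\trianglelefteq G,
\]
which is well defined independently of the choice of first-level vertex (by the $a$-equivariance of $N$) and normal in $G$. The critical claim is the strict index drop $[G:N^{(1)}]<[G:N]$ whenever $N\ne G$: granted this, after finitely many steps one reaches $N^{(k)}=G$, and backtracking through the $\psi$-embeddings together with the normal form of \autoref{2.2.2} produces a rigid stabilizer inside $N$.

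\textbf{The main obstacle} is exactly the strict decrease $[G:N^{(1)}]<[G:N]$, which must rule out normal subgroups of $G$ with extremal branch-invariance. Here one would use the normal form of \autoref{2.2.2} together with the quotient $G/B'\cong C_p\wr C_p$ from \autoref{facts}\ref{wreath}: any proper normal subgroup of $G$ failing to contain $G'$ must already exhibit some obstruction at a first-level section, which the branch identity $\psi(B')=(G')^{\times p}$ amplifies at the next step. It is precisely this technical accounting that appears to be the source of the ``not quite clear'' remark about \cite[Proposition 8.4]{BartCongruence}.
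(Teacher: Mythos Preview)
Your outline takes a route that is genuinely different from the paper's, and as you yourself acknowledge, it is incomplete at the decisive point. The strict index drop $[G:N^{(1)}]<[G:N]$ is stated but not proved, and the ``backtracking'' from $N^{(k)}=G$ to $\rist_G(k)\subseteq N$ is also not spelled out; neither step is a formality. The index drop in particular is not a formal consequence of the regular-branch structure --- one must actually compute with the quotient $G/B'\cong C_p\wr C_p$ and rule out the possibility that passing to a first-level section fails to strictly shrink the index. As written, the proposal is a plausible strategy with its hardest step missing.

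The paper sidesteps this inductive scheme entirely. It invokes a general criterion from \cite[Proposition 3.8]{BartCongruence}: the congruence subgroup property holds once one shows $G''$ contains some level stabiliser $\St(m)$. The paper then establishes $G''\geq\St(4)$ directly. For $p>3$ this follows from \cite[Lemma~2]{GrigWilPride-equivalence}, which gives $G''\geq (G')^{\times p^2}$, combined with \autoref{G'>St2}. For $p=3$ the paper supplies a short explicit computation: setting $C=[G',G]$, it exhibits concrete commutators to show first $G''\geq C\times C\times C$ and then $C\geq G'\times G'\times G'$, whence $G''\geq\St(4)$. This computational route is short and self-contained, and avoids precisely the delicate index accounting you flag as the obstacle.
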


\begin{proof}
By \cite[Proposition 3.8]{BartCongruence}, it suffices to show that $G''$ contains some level stabilizer $\St(m)$.
The case $p>3$ follows from \cite[Lemma 2]{GrigWilPride-equivalence}, 
which shows that $G''\geq G'\times\cdots\times G'$, the direct product of $p^2$ copies of $G'$. 
Thus, by \autoref{G'>St2} we have $G''\geq \St(2) \times\cdots\times \St(2) \geq \St(4)$.

For the case $p=3$ we must prove an analogous version of \cite[Lemma 2]{GrigWilPride-equivalence}.
Let $C=[G',G]$ be the third term of the lower central series of $G$.
Since $G'$ contains $B'=G'\times G'\times G'$ and the elements $[a,b]^{a^{-1}}=(a, ab, b^{-1}a)$, $b^{(1)}=(b,b,b)$, we have
$$([x,a],1,1)\in G'' \text{ and } ([x,b],1,1) \in G''$$
for any $x\in G'$. 
Therefore $G''\geq C\times 1\times 1$ as 
$C=\langle \{[x,a], [x,b] \mid x\in G'\}^G\rangle $ and $G''\lhd \St(1)$.
Conjugating by suitable powers of $a$
we obtain that $G''\geq C\times C\times C$. 
Now, $[[b^{-1},a],b_1b_2]=(1,[a,b],1)\in C$
and from this we conclude that $C\geq G'\times G'\times G'$.
Thus $G''\geq \St(4)$ as above.
\end{proof}

This property will be important for the proofs of \autoref{mainthm} and \autoref{maxsugps} but 
it is also obviously useful for the study of finite quotients of $G$.
In this direction, we point out that in  \cite{ggscongruence}
the authors give an explicit formula for the indices $|\Gamma:\St_{\Gamma}(n)|$
not just for the Gupta--Sidki $p$-group, 
but a more general class of groups $\Gamma$ which act on  $p$-regular rooted trees (\emph{GGS groups}).
They also prove, using different methods,  some of the properties stated in the previous lemmas for arbitrary GGS groups.

The following lemma will be essential in the proof of \autoref{thm3}.

\begin{lem}\label{roadmap}
Let $H$ be a subgroup of $G$ which is not contained in $\St(1)$.
 Then  either all first level vertex sections of $H$ are equal to $G$, 
 or they are all contained in $\St(1)$ so that $\St_H(1)=\St_H(2)$.
\end{lem}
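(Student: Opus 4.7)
The plan is to establish the dichotomy in two stages. First I would pick any $h \in H \setminus \St(1)$, written as $h = a^k c$ with $1 \leq k \leq p-1$ and $c = (c_0,\ldots,c_{p-1}) \in B$, and compute directly that $(g^h)_i = c_i^{-1} g_{i-k} c_i$ for $g \in \St_H(1)$. This gives $H_i = c_i^{-1} H_{i-k} c_i$; since $\St(1)$ is normal in $\Aut T$ and $\gcd(k,p)=1$, iterating the shift shows that either all $H_u \leq \St(1)$ or none is. In the former case, every $g \in \St_H(1)$ has all its vertex sections in $\St(1)$, so $g \in \St(2)$, giving $\St_H(1) = \St_H(2)$ as claimed.

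Suppose instead that every $H_u \not\leq \St(1)$, and assume for contradiction that $H_0 \lneq G$. Because $G$ is 2-generated, Zorn's lemma (chains of proper subgroups being bounded, since the union of any ascending chain of proper subgroups cannot contain both generators of $G$) yields a maximal proper subgroup $M$ of $G$ containing $H_0$. By Pervova's theorem \cite{PervovaGupta}, $[G:M]$ is finite, and since $G$ is a $p$-group this forces $[G:M]=p$, so $M$ is normal and $M \supseteq G'$. Hence $M/G'$ is a subgroup of order $p$ in $G/G' \cong C_p\times C_p$; it cannot be $B/G' = \langle bG'\rangle$ (as $H_0 \not\leq B$), so $M/G' = \langle ab^r G'\rangle$ for some $r \in \{0,\ldots,p-1\}$. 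The abelianity of $G/G'$ together with the conjugacy $H_i = c_i^{-1} H_{i-k} c_i$ make $\bar H_i = H_iG'/G'$ independent of $i$, so every $g \in \St_H(1)$ satisfies $g_i G' \in \langle ab^r G'\rangle$ for all $i$. If any such $g$ had $g_0 \notin G'$, then $\langle g_0\rangle G' = \langle ab^rG'\rangle$ (both being the unique order-$p$ subgroup of $G/G'$ containing $g_0 G'$), so \autoref{nocyclic} would force $g_0 G' \in \langle bG'\rangle$. But the two order-$p$ subgroups $\langle bG'\rangle$ and $\langle ab^rG'\rangle$ meet trivially in $C_p\times C_p$, giving $g_0 \in G'$, a contradiction. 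Thus every $g \in \St_H(1)$ has $g_0 \in G' \leq \St(1)$, so $H_0 \leq \St(1)$, contradicting the assumption; therefore $H_0 = G$, and by the conjugacy all $H_u = G$.

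The main obstacle is the second stage, where one has to combine the quotient-level input (Pervova's theorem, which pins $M/G'$ down to one of the $p$ cyclic subgroups of order $p$ distinct from $\langle bG'\rangle$) with the coordinate-level input from \autoref{nocyclic} (which converts the cyclicity of every $\bar H_i$ into a rigid constraint on the tuples in $\St_H(1) \subseteq B$). The first stage, by contrast, is essentially bookkeeping once the conjugation formula for $(g^h)_i$ is written down.
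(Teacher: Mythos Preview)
Your proof is correct and follows the same overall strategy as the paper: first show that the first-level vertex sections $H_u$ are pairwise conjugate in $G$ (hence either all lie in $\St(1)$ or none does), then combine the equality of their images modulo $G'$ with \autoref{nocyclic} to force the remaining case.

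The one substantive difference is your appeal to Pervova's theorem. The paper, after observing that the $H_u$ share a common image in $G/G'$, simply asserts that ``the hypothesis of \autoref{nocyclic} holds'' for each $h\in\St_H(1)$. But that hypothesis asks that every $h_iG'$ lie in the cyclic group $\langle h_0G'\rangle$, which is only guaranteed once one knows the common image $H_0G'/G'$ is itself cyclic, i.e.\ proper in $G/G'\cong C_p\times C_p$. Passing from $H_0\neq G$ to $H_0G'\neq G$ is precisely the statement $G'\leq\Phi(G)$; for an infinite torsion $p$-group this is not automatic and is equivalent to every maximal subgroup having index $p$ --- which is Pervova's result. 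So your extra step is not gratuitous: it makes explicit an ingredient the paper's argument uses tacitly. The paper's version is shorter and avoids naming the external theorem, while yours trades brevity for a fully justified case analysis at this one point. (A minor notational slip: you write $\langle g_0\rangle G'=\langle ab^rG'\rangle$, equating a subset of $G$ with a subset of $G/G'$; you mean $\langle g_0G'\rangle=\langle ab^rG'\rangle$, and the argument goes through as intended.)
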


\begin{proof}
Denote by $H_0,\dots, H_{p-1}$ the first level vertex sections of $H$.
We claim that they are all conjugate in $G$.
Since $H$ is not contained in $B$ there exists $as\in H$ with $s=(s_0,\ldots,s_{p-1})\in B$. 
Thus, for every $h=(h_0,\ldots,h_{p-1})\in \St_H(1)$ we have $h^{as}=(h_{p-1}^{s_0},h_0^{s_1},\ldots,h_{p-2}^{s_{p-1}})\in \St_H(1)$.
From this we see that $H_i=H_{i-1}^{s_i}$ for $i=0,\ldots,p-1$.
The claim follows repeating this argument with powers of $as$.

Hence we may assume that no $H_i$ is equal to $G$.
We examine the image of $H$ modulo $B'$ (equivalently, the images of the $H_i$ modulo $G'$). 
Since all these vertex sections are conjugate in $G$, they must have the same images modulo $G'$ 
and the possibilities for these are $\langle ab^k\rangle$ for $k\in\mathbb{F}_p$, or $\langle b\rangle$, or $\langle a\rangle\langle b\rangle$. 
If $H_iG'=\langle a\rangle\langle b\rangle G'$ then $H_i=G$. 
Suppose not, then $H_i$ is contained in some maximal subgroup $M<G$, which by \cite{PervovaGupta} has index $p$ in $G$. 
Thus $M\geq G'$ and $G=\langle a\rangle\langle b\rangle G'=H_iG'\leq MG'=M$, a contradiction. 

So either $H_iG'=\langle b\rangle G'$ for all $i$ or there exists $k\in\mathbb{F}_p$ such that $H_iG'=\langle ab^k\rangle G'$ for all $i$.
Pick some $h=(h_0,\ldots,h_{p-1})\in\St_H(1)$ and consider its image modulo $B'$. 
If there exist $i\neq j$ such that $h_iG'$ and $h_jG'$ lie in different cyclic subgroups of 
$G/G'$
then, conjugating by suitable elements of $H\setminus \St(1)$ we would obtain $H_iG'=H_jG'=\langle a\rangle\langle b\rangle G'$, a contradiction to the above. 
Thus all $h_iG'$ lie in the same cyclic subgroup of $G/G'$. 
Our ultimate goal is to show that this cyclic subgroup is $\langle b\rangle G'$, 
so suppose for a contradiction that there is some $k\in\mathbb{F}_p\setminus\{0\}$ 
and some $(r_0,\dots,r_{p-1})\in\mathbb{F}_p^p$
such that 
$$(h_0G',\dots,h_{p-1}G')=((ab^k)^{r_0}G',\dots, (ab^k)^{r_{p-1}}G').$$
Now, by \autoref{2.2.2},
$$(h_0G',\ldots, h_{p-1}G')=(a^{-n_{p-1}+n_0}b^{n_1}G', a^{-n_0+n_1}b^{n_2}G',\ldots, a^{-n_{p-2}+n_{p-1}}b^{n_0}G')$$
for some  $n_i\in\mathbb{F}_p$. 
Comparing these representations of $hB'$, we obtain the following equations describing the exponents of $a$ and $b$:
$$(n_0,\dots,n_{p-1})C=(r_0,\dots,r_{p-1}) \textrm{ and } (n_0,\dots,n_{p-1})P=k(r_0,\dots,r_{p-1})$$
where $$C=\begin{pmatrix}
		   1 & -1  & 0  & \dots  & 0  \\
		   0 & 1   & -1 & \dots  & 0 \\
		   \vdots & \vdots &\ddots &\ddots  & \vdots  \\
		   0 & 0 &\dots & 1 &-1\\
		   -1 & 0  & 0  &\dots & 1
         \end{pmatrix}$$        
and $P$ is the permutation matrix associated to $(1\, 2\, \dots\, p)$.
These are equivalent to 
$$(n_0,\dots,n_{p-1})(kCP^{-1}-I)=(0,\dots,0).$$ 
The matrix $kCP^{-1}-I$ is a circulant matrix over $\mathbb{F}_p$ 
with non-zero determinant. 
Hence $(n_0,\dots,n_{p-1})=(0,\dots,0)$ is the only solution and $h\in B'$, which finishes the proof of the lemma. 
\end{proof}

The proof of \autoref{mainthm} relies on a word length reduction argument.
 Instead of the usual word length for finitely generated groups, we use a length function which only takes into account the number of conjugates of $b$.

Let $g\in G$ and let $a^{s_0}b^{r_1}a^{s_1}\cdots b^{r_m}a^{s_m}$ be a shortest (in the usual sense) word in $\{a, b\}$ representing $g$.
We can rewrite this word as 
$$b_{i_1}^{r_1}\cdots b_{i_m}^{r_m}a^v$$
where $b_{i_j}\neq b_{i_j+1}$ for each  $j$  and $v\in\{0,\ldots, p-1\}.$ 

Define the \emph{length} $l(g)$ of $g$ to be $m$. Thus $l(g)$ is the number of conjugates of $b$ in a shortest word in $a,b$ representing $g$.

 The following easy lemma shows how the length of elements of $G$ is reduced as we project down levels of the tree. Notice that this holds for any odd prime $p$. 

\begin{lem}\label{length}
Let $g\in \St(1)$ and suppose that $l(g)=m$. Then 
\begin{enumerate}[label=\rm{(\roman*)}]
\item $l(\varphi_k(g))\leq \frac{1}{2}(m+1)$ and 
\item if $g\in \St(2)$ then $l(\varphi_j(\varphi_k(g)))\leq \frac{1}{4}(m+3)$ 
\end{enumerate}
for $k,j\in \{0,\ldots,p-1\}$.
\end{lem}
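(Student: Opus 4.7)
My plan is to project $g$ coordinate by coordinate and track how many conjugates of $b$ can survive. First, I would take a shortest word representation $g = b_{i_1}^{r_1} \cdots b_{i_m}^{r_m} a^v$ with $i_j \neq i_{j+1}$, as in the definition of $l(g)$. Since $g \in \St(1)$ and the natural homomorphism $G \to C_p$ recording the rotation of the first layer sends $a \mapsto 1$ and $b \mapsto 0$, the exponent $v$ must vanish; hence $g = b_{i_1}^{r_1} \cdots b_{i_m}^{r_m}$.

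Next, I would compute $\varphi_k$ on each factor. Recall that $b_i$ has vertex section $a$ at coordinate $i$, $a^{-1}$ at coordinate $i+1 \pmod{p}$, $b$ at coordinate $i-1 \pmod{p}$, and $1$ at all other coordinates. Hence $\varphi_k(b_{i_j}^{r_j})$ equals $a^{r_j}$ when $i_j \equiv k$, $a^{-r_j}$ when $i_j \equiv k-1$, $b^{r_j}$ when $i_j \equiv k+1$, and $1$ otherwise. Concatenating these contributions gives an interleaved product of $a$-powers and $b$-powers in which every $b$-power arises from an index $j$ with $i_j \equiv k+1 \pmod{p}$. The constraint $i_j \neq i_{j+1}$ means any fixed value can occur only at every other position, so at most $\lceil m/2 \rceil$ of the $i_j$'s are $\equiv k+1$. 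Combining adjacent $b$-powers in this natural expression can only shorten the resulting word, so $l(\varphi_k(g)) \leq \lceil m/2 \rceil \leq (m+1)/2$, which is (i).

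For (ii), I would simply iterate. Since $g \in \St(2)$ implies $\varphi_k(g) \in \St(1)$, part (i) applies again to give $l(\varphi_j(\varphi_k(g))) \leq (l(\varphi_k(g)) + 1)/2 \leq ((m+1)/2 + 1)/2 = (m+3)/4$. I do not foresee a real obstacle here; the only care needed is the bookkeeping of which coordinate of $b_i$ carries $a$, $a^{-1}$ or $b$, together with the elementary observation that in a sequence with no two consecutive entries equal, no fixed value occurs more than $\lceil m/2 \rceil$ times.
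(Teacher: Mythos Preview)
Your argument is correct and follows essentially the same route as the paper. Both proofs write $g=b_{i_1}^{r_1}\cdots b_{i_m}^{r_m}$, observe that in $\varphi_k(g)$ only the factors with $i_j\equiv k+1$ contribute a $b$-power, and use $i_j\neq i_{j+1}$ to bound the number of such factors by $\lceil m/2\rceil$; part (ii) is then a straight iteration in both. The only cosmetic difference is that the paper lists the four possible alternating patterns $a\,b\,a\cdots$, $a\,b\cdots b$, $b\,a\cdots a$, $b\,a\cdots b$ explicitly, while you phrase the same count as ``no value repeats in consecutive positions, hence at most $\lceil m/2\rceil$ occurrences''.
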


\begin{proof}
Since $g\in\St(1)$, we can write it in the form $g=b_{i_1}^{r_1}\cdots b_{i_m}^{r_m}$ for some $i_j, r_j \in \{0,\ldots,p-1\}$.
Now, the image $\varphi_k(g)$ for each $k\in\{0,\ldots,p-1\}$  is, at worst, of one of the following forms:
\begin{enumerate}
\item $a^{r_1}b^{r_2}\cdots a^{r_m}$, so $l(\varphi_k(g))\leq\frac{m-1}{2}$;
\item $a^{r_1}b^{r_2}\cdots a^{r_{m-1}}b^{r_m}$, so $l(\varphi_k(g))\leq\frac{m}{2}$;
\item $b^{r_1}a^{r_2}\cdots b^{r_{m-1}}a^{r_m}$, so $l(\varphi_k(g))\leq\frac{m}{2}$;
\item $b^{r_1}a^{r_2}\cdots b^{r_m}$, so $l(\varphi_k(g))\leq\frac{m+1}{2}$.
\end{enumerate}
This proves the first item.

It now follows that for any $g\in \St(2)$ we have 
$$l(\varphi_j(\varphi_k(g)))\leq \frac{1}{2} \left( \frac{m+1}{2}+1 \right) =\frac{1}{4}(m+3),$$
 for every $j,k\in\{0,\ldots,p-1\}$. 
\end{proof}

\section{Maximal subgroups}\label{max section}

In this section we establish some results about groups whose maximal subgroups all have finite index.
Once \autoref{mainthm} is proved, these results will show that, for each finitely generated subgroup of the Gupta--Sidki 3-group, all maximal subgroups have finite index.
The results and proofs are analogous to those in \cite{griwil}.\\

\noindent\textbf{Notation.}
We will write $H\lf \Gamma$  and $H\ls \Gamma_1 \times \cdots \times \Gamma_n$ to mean, respectively,
that $H$ is a finite index subgroup of $\Gamma$ and that  $H$ is a subdirect product of $\Gamma_1 \times \cdots \times \Gamma_n$.

\begin{lem}\label{Lemma1}
Let $\Gamma$ be an infinite finitely generated group and $H$ a subgroup of finite index in $\Gamma$. 
If $\Gamma$ has a maximal subgroup $M$ of infinite index then $H$ has a maximal subgroup of infinite index containing $H\cap M$.
\end{lem}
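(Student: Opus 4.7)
The plan has two parts. First I will use Zorn's lemma to produce a subgroup $M^*$ of $H$ which is maximal among subgroups containing $H\cap M$ of infinite index in $H$. Then I will show, via the Dedekind modular law, that $M^*$ is in fact a maximal subgroup of $H$.

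For the Zorn step, I first observe that $[H:H\cap M]=\infty$, which follows from the identity $[\Gamma:H\cap M]=[\Gamma:H][H:H\cap M]$ together with the bounds $[\Gamma:H\cap M]\geq[\Gamma:M]=\infty$ and $[\Gamma:H]<\infty$. So the family
$$\mathcal{S} = \{K \le H : H\cap M \le K,\ [H:K]=\infty\}$$
is non-empty, with $H\cap M$ as a member. The key check for Zorn is that any chain in $\mathcal{S}$ has an upper bound in $\mathcal{S}$: if the union of such a chain had finite index in $H$ it would itself be finitely generated (as a finite-index subgroup of the finitely generated $H$), forcing its generators into a single chain member and collapsing the union to that member --- contradicting that member's infinite index. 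Zorn's lemma then produces a maximal element $M^*$ of $\mathcal{S}$.

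It remains to verify that $M^*$ is actually a maximal subgroup of $H$. Suppose not; then there exists $L$ with $M^* \lneq L \lneq H$. Maximality of $M^*$ in $\mathcal{S}$ forces $[H:L]<\infty$, hence $[\Gamma:L]<\infty$. To rule out this $L$ I will pass to the normal core $L_0 := \bigcap_{g\in\Gamma} L^g$, a finite-index normal subgroup of $\Gamma$ contained in $L$. Because $L_0$ cannot lie in $M$ (finite versus infinite index in $\Gamma$), the subgroup $L_0 M = M L_0$ strictly contains $M$, and maximality of $M$ forces $L_0 M = \Gamma$; in particular $LM = \Gamma$ as a subset of $\Gamma$. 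Dedekind's modular law, applied with $L \le H$, then yields
$$L(H \cap M) \;=\; LM \cap H \;=\; \Gamma \cap H \;=\; H,$$
and since $H \cap M \le M^* \le L$ the left-hand side equals $L$, forcing $L=H$ --- a contradiction.

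The step I expect to be the main obstacle is producing $LM=\Gamma$ as a set: since $M$ is not assumed normal, $LM$ is not even obviously a subgroup, so maximality of $M$ does not apply to it directly. The detour through the normal core $L_0$ (for which $L_0 M$ is automatically a subgroup, and so must equal $M$ or $\Gamma$) is what rescues this step, after which Dedekind's law closes the argument.
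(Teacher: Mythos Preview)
Your proof is correct and follows essentially the same approach as the paper: both arguments hinge on taking the normal core in $\Gamma$ of a putative finite-index subgroup $L\le H$ containing $H\cap M$, using maximality of $M$ to get $L_0 M=\Gamma$, and then applying the modular law to force $L=H$. The only organizational difference is that the paper proves the contrapositive step first (``no proper finite-index subgroup of $H$ contains $H\cap M$'') and then invokes the existence of maximal subgroups in finitely generated groups---explicitly citing Neumann to avoid Zorn's lemma---whereas you run Zorn first and the core argument second.
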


\begin{proof}
We first show that any proper subgroup of $H$ containing $H\cap M$ must be of infinite index in $H$.
Suppose for a contradiction that there is a proper finite index subgroup $L$ of $H$ containing $H\cap M$.
Then $K$, the normal core of $L$ in $\Gamma$, is of finite index in $\Gamma$ and therefore $KM\leq \Gamma$ must be of finite index too.
Now, $M$ is a maximal subgroup of $\Gamma$ contained in $KM$, so either $M=KM$ or $KM=\Gamma$.
 If the former holds, then $M$ is of finite index in $\Gamma$, a contradiction. 
If the latter is true, we obtain $H=K(M\cap H)\leq L$, contradicting the assumption that $L$ is a proper subgroup of $H$.

Since $H$ is finitely generated, every proper subgroup is contained in a maximal subgroup 
(this can be shown without using Zorn's Lemma, see \cite{Neumann01041937})
and, by the above, the maximal subgroup containing $H\cap M$ must be of infinite index in $H$. 
\end{proof}

Recall that a \emph{chief factor} of a group $\Gamma$ is a minimal normal subgroup of a quotient group of $\Gamma$. 

\begin{lem}[\cite{griwil}, Lemma 3]\label{chief factors}
Let $\Gamma_1,\cdots,\Gamma_n$ be groups with the properties that all chief factors are finite and all maximal subgroups have finite index.
 If $\Delta\ls\Gamma_1\times\cdots\times\Gamma_n$ then all chief factors of $\Delta$ are finite and all maximal subgroups of $\Delta$ have finite index. 
\end{lem}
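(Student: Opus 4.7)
The plan is to induct on $n$, proving both statements together; the base case $n=1$ is immediate since $\Delta=\Gamma_1$. For $n\geq 2$, let $L_i := \Delta\cap\ker(\pi_i)$ where $\pi_i:\prod_j\Gamma_j\to\Gamma_i$ is the coordinate projection. Each $L_i$ is normal in $\Delta$ with $\Delta/L_i\cong\Gamma_i$, and $\bigcap_i L_i=1$ because $\Delta$ is subdirect.

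For chief factors, take a chief factor $A/B$ of $\Delta$. Each subgroup $(A\cap L_i)B/B$ of $A/B$ is $\Delta$-invariant, hence trivial or all of $A/B$ by minimality. If the former holds for some $i$, that is, $A\cap L_i\leq B$, then $\pi_i$ descends to a $\Delta$-equivariant isomorphism $A/B\cong\pi_i(A)/\pi_i(B)$ onto a normal section of $\Gamma_i$; surjectivity of $\pi_i$ transports minimality, turning this section into a chief factor of $\Gamma_i$, which is finite by hypothesis. In the contrary case $A=(A\cap L_i)B$ for every $i$, and $A/B\cong(A\cap L_i)/(L_i\cap B)$ realises $A/B$ as a $\Delta$-normal section sitting inside $L_i$; since $L_i\ls\prod_{j\neq i}\pi_j(L_i)$, applying the induction to $L_i$ finishes the argument.

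For maximal subgroups, let $M$ be maximal in $\Delta$. By maximality, for each $i$ either $L_i\leq M$ or $ML_i=\Delta$. If $L_i\leq M$ for every $i$, then $M\supseteq N:=L_1\cdots L_n$, and $\Delta/N$ is a quotient of each $\Delta/L_i\cong\Gamma_i$, so inherits the property that all maximal subgroups are of finite index; therefore $[\Delta:M]<\infty$. Otherwise $ML_i=\Delta$ for some $i$, giving $[\Delta:M]=[L_i:L_i\cap M]$; a short argument using $KM\in\{M,\Delta\}$ for any $K$ with $L_i\cap M\leq K\leq L_i$ shows that $L_i\cap M$ is itself maximal in $L_i$, so the induction applied to $L_i\ls\prod_{j\neq i}\pi_j(L_i)$ supplies the required finite index.

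The principal obstacle is that the factors $\pi_j(L_i)$ appearing in these two inductive invocations are only normal subgroups of $\Gamma_j$, and the two hypotheses do not obviously descend from $\Gamma_j$ to its normal subgroups. I expect to resolve this by strengthening the inductive statement to allow factors drawn from the broader class of normal subgroups of groups with the two properties, proving the necessary inheritance en route using the chief-factor control established in the first part.
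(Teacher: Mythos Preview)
The paper does not prove this lemma; it is quoted from \cite{griwil} without argument, so there is no in-paper proof to compare against.

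Your inductive set-up and case analyses are correct, and you have put your finger on the real obstruction: to recurse on $L_i$ you would need the hypotheses for the factors $\pi_j(L_i)$, which are merely normal subgroups of the $\Gamma_j$. The fix you sketch---broaden the inductive statement to factors that are normal subgroups of groups with the two properties, and prove inheritance along the way---is not carried out and is not obviously viable: the base case $n=1$ of the strengthened statement \emph{is} the inheritance claim, and neither ``all chief factors finite'' nor ``all maximal subgroups of finite index'' is known to pass to arbitrary normal subgroups (an $N$-chief factor need not be a $\Gamma$-chief factor, and the maximal-subgroup property has no such descent in general). So as it stands the proposal is a plan with a correctly diagnosed but unresolved gap. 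A cleaner route avoids the strengthening altogether. For chief factors, run down the chain $M_k=\bigcap_{i\le k}L_i$: at the least $k$ with $A\cap M_k\le B$ one obtains $A/B\cong\pi_k(A\cap M_{k-1})/\pi_k(B\cap M_{k-1})$, and since both terms are images of $\Delta$-normal subgroups under the surjection $\pi_k$ they are $\Gamma_k$-normal, so this is a genuine chief factor of $\Gamma_k$. For maximal subgroups, first reduce to $n=2$ by writing $\Delta\ls\Gamma_1\times\Delta'$ where $\Delta'$ is the projection of $\Delta$ to $\Gamma_2\times\cdots\times\Gamma_n$ (which has both properties by induction on $n-1$ factors); then for $n=2$, in the remaining case $ML_1=ML_2=\Delta$, the key point is that $L_2$ centralises $L_1$ (as $L_1\cap L_2=1$ with both normal), so $\Delta=ML_2$ acts on $L_1$ through $M$ alone, whence $M\cap L_1\trianglelefteq\Delta$; being both maximal and normal in $L_1$, it has prime index, and $[\Delta:M]=[L_1:M\cap L_1]<\infty$.
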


\begin{thm}\label{maxsugps}
Let $\Gamma:=G^{(\times k)}$ be the direct product of $k$ copies of $G$. 
If $H$ is a group commensurable with $\Gamma$ then all maximal subgroups of $H$ have finite index in $H$.
\end{thm}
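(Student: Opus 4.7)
The plan is a cascade: I would transport the property ``every maximal subgroup has finite index'' from a small, transparent subgroup of $\Gamma$ upward to $H$, using \autoref{Lemma1} twice and \autoref{chief factors} once. Before starting, I would verify that $G$ itself satisfies the hypotheses of \autoref{chief factors}: every maximal subgroup of $G$ has finite index by Pervova's theorem (\cite{PervovaGupta}), and every chief factor of $G$ is finite because $G$ is just infinite (so any chief factor $K/L$ with $L \ne 1$ embeds in the finite group $G/L$), while the branch structure of $G$ — whose rigid level stabilisers intersect trivially — rules out a nontrivial minimal normal subgroup, so the case $L = 1$ does not arise.

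Unpacking commensurability, I would fix $H_0 \lf H$ and $\Gamma_0 \lf \Gamma$ with $H_0 \cong \Gamma_0$, and first aim to show that $\Gamma_0$ has all maximal subgroups of finite index. To exhibit a useful subgroup inside $\Gamma_0$, let $N$ be the normal core of $\Gamma_0$ in $\Gamma$; this is a finite-index normal subgroup of $\Gamma = G^{(\times k)}$, and for each factor $G_i$ the intersection $N \cap G_i$ is normal of finite index in $G_i \cong G$. By the congruence subgroup property (\autoref{CSP}) each $N \cap G_i$ contains some $\St(n_i)$, so taking $n = \max_i n_i$ yields $S := \St(n)^{(\times k)} \leq N \leq \Gamma_0$.

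By \autoref{subdirect} we have $\St(n) \ls G^{(\times p^n)}$, hence $S \ls G^{(\times k p^n)}$. Applying \autoref{chief factors} with every $\Gamma_i = G$ gives that $S$ has all maximal subgroups of finite index (and all chief factors finite). Since $S \lf \Gamma_0$ and $\Gamma_0$ is finitely generated (being of finite index in the finitely generated $\Gamma$), the contrapositive of \autoref{Lemma1} applied to the pair $\Gamma_0 \geq S$ propagates the property up to $\Gamma_0$. Transferring along the isomorphism $\Gamma_0 \cong H_0$ gives it for $H_0$, and a final application of \autoref{Lemma1} to $H \geq H_0$ — with $H$ finitely generated because it is generated by $H_0$ together with a finite transversal — yields the result for $H$.

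The main obstacle is bookkeeping rather than a substantive mathematical difficulty: one must check that finite generation is preserved at every stage so that the theorem of \cite{Neumann01041937} (embedding proper subgroups into maximal subgroups) is available wherever \autoref{Lemma1} is invoked, and one must confirm that the hypotheses of \autoref{chief factors} hold for $G$. Once these preliminaries are granted, the three-stage ascent $S \to \Gamma_0 \to H_0 \to H$ is essentially automatic.
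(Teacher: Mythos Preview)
Your argument is correct and follows essentially the same route as the paper's: pass to a finite-index subgroup $S$ of $\Gamma_0$ that is a subdirect product of copies of $G$ via the congruence subgroup property, apply \autoref{chief factors} to $S$, then climb back up to $H$ using \autoref{Lemma1} twice. The only cosmetic differences are that you pass through the normal core and use a common $n$, while the paper intersects $J$ directly with each factor and allows distinct $n_i$; and your justification that $G$ has no minimal normal subgroup via the branch structure is replaced in the paper by the observation that residual finiteness together with just-infiniteness already forces all chief factors to be finite.
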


\begin{proof}

By definition of commensurability, there exist $K\lf H$ and $J\lf\Gamma$ with $K$ isomorphic to $J$.
For $i=1,\ldots,k$, let $G_i$ denote the $i$th direct factor of $\Gamma$.
Then for every $i$ the subgroup $J_i:=J\cap G_i$ has finite index in $G_i$.
As $G$ has the congruence subgroup property, for each $i$ there is some $n_i$ with $1\times\cdots\times \St(n_i)\times\cdots\times 1\leq J_i$ and so $S:=\St(n_1)\times\cdots\times\St(n_k)\lf J$. 

Now, $G$ is residually finite and just infinite and so all of its chief factors are finite. Hence, by the main result in \cite{PervovaGupta}, all of its maximal subgroups have finite index.
Furthermore, we saw in \autoref{subdirect} that $\St(n)$ is a subdirect product of $p^n$ copies of $G$. 
Thus $S$ satisfies the assumptions of \autoref{chief factors} and all of its maximal subgroups are of finite index. 
\autoref{Lemma1} then implies that all maximal subgroups of $J$ must have finite index in $J$. 
The result now follows on applying \autoref{Lemma1} to $K\cong J$ and $H$. 
\end{proof}

\section{The case $p=3$: a key theorem}\label{prelims}

\noindent\textbf{Notation.}
From now on  we restrict our attention to the case $p=3$, so $G$ will denote the Gupta--Sidki 3-group.
Recall that $H_v$ denotes the vertex section of a subgroup $H\leq \Aut T$ at vertex $v$; 
that is, $H_v=\varphi_v(\St_H(v))=\varphi_{u_n}\circ\cdots\circ\varphi_{u_1}(\St_H(v))$ where $v=u_1\dots u_n$
is considered as a string of letters $u_i\in\{0,\ldots,p-1\}$.

\begin{thm}\label{thm3}
Let $\mathcal{X}$ be a family of subgroups of $G$ satisfying
\begin{enumerate}[label={\rm(\Roman*)},ref=(\Roman*)]
\item\label{prop1} $1\in \mathcal{X}$, $G\in \mathcal{X}$;
\item\label{prop2} if $H\in \mathcal{X}$ then $L\in \mathcal{X}$ for all $L\leq G$ such that $H\lf L$;
\item\label{prop3} if $H$ is a finitely generated subgroup of $\St(1)$ and
all first level vertex sections of $H$ are in $\mathcal{X}$ then $H\in \mathcal{X}$.
\end{enumerate}
Then all finitely generated subgroups of $G$ are in $\mathcal{X}$.
\end{thm}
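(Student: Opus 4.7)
The plan is to argue by strong induction on the length invariant $L(H):=\min_{S}\max_{g\in S}l(g)$, minimised over finite generating sets $S$ of $H$. Before starting, I would record a free observation: property (II) immediately absorbs every finite subgroup of $G$ into $\mathcal{X}$, since $\{1\}\in\mathcal{X}$ by (I) and $\{1\}\lf H$ whenever $H$ is finite. This disposes of the base case $L(H)=0$, where $H\le\langle a\rangle$ is automatically finite. For the base case $L(H)=1$ each generator of $H$ has the form $b_j^r a^v$ with $r,v\in\{0,1,2\}$, so the generators of every first-level vertex section $H_u$ belong to $\langle a\rangle\cup\langle b\rangle$; hence each $H_u$ lies in $\{\{1\},\langle a\rangle,\langle b\rangle,G\}\subseteq\mathcal{X}$, and either (III) (when $H\le\St(1)$) or the dichotomy of \autoref{roadmap} (otherwise) concludes.

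For the inductive step with $L(H)\ge 2$, I would invoke \autoref{roadmap} to separate three regimes. If $H\le\St(1)$, each $H_u=\varphi_u(H)$ is generated by the images $\varphi_u(g_i)$ of the generators of $H$, which by \autoref{length}(i) have $l$-length at most $(L(H)+1)/2<L(H)$; the inductive hypothesis puts every $H_u\in\mathcal{X}$, and (III) yields $H\in\mathcal{X}$. If $H\not\le\St(1)$ and every first-level vertex section equals $G$, then $\St_H(1)$ is a finitely generated subgroup of $\St(1)$ whose vertex sections all equal $G\in\mathcal{X}$, so (III) places $\St_H(1)\in\mathcal{X}$ and (II) upgrades this to $H\in\mathcal{X}$ via $\St_H(1)\lf H$.

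The principal case, and the main obstacle, is $H\not\le\St(1)$ with all $H_u\le\St(1)$ and $\St_H(1)=\St_H(2)$. Here a Schreier--Reidemeister generating set of $\St_H(1)$, built from the transversal $\{1,\tilde a,\tilde a^2\}$ for a shortest $\tilde a\in H$ with $\tilde a\equiv a\pmod{\St(1)}$, has members of $l$-length only linearly bounded in $L(H)$ (about $5L(H)$ in the worst case), so a single application of $\varphi_u$ does not in general bring the generator length strictly below $L(H)$. The resolution is to exploit the hypothesis $\St_H(1)\le\St(2)$: every Schreier generator lies in $\St(2)$, and one iterates the case-$H\le\St(1)$ descent through the successive vertex sections $H_u$, $H_{uv}$, $H_{uvw},\dots$, applying \autoref{length}(i) at each step (or the sharper \autoref{length}(ii) where it applies) so that the $l$-length of the generators is roughly halved at each descent. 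After a bounded number of steps the length drops strictly below $L(H)$; the inductive hypothesis then applies at that depth, and one climbs back up through iterated uses of (III) at each intermediate subgroup (each of which lies in $\St(1)$ by construction) and a final application of (II) to pass from $\St_H(1)$ to $H$, yielding $H\in\mathcal{X}$. Any branch of the descent which happens to meet the $H_v=G$ regime is closed immediately by the preceding paragraph, so the iteration always terminates within bounded depth.
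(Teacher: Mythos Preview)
Your iteration in the principal case is the gap. You correctly observe that $\St_H(1)=\St_H(2)$, so one is guaranteed \emph{exactly two} levels of descent: Schreier generators lie in $\St(2)$, hence $H_u\le\St(1)$, and you can form $H_{uv}$. But nothing forces $H_{uv}\le\St(1)$. If $H_{uv}\not\le\St(1)$ and its first-level sections are not all $G$, then by \autoref{roadmap} you are back in the principal case for $H_{uv}$; to descend further you must take a new Schreier transversal inside $H_{uv}$, which multiplies the generator length by your constant $c$ again and cancels the halving you just achieved. Your sentence ``any branch which meets the $H_v=G$ regime is closed immediately'' covers only one of the two alternatives in \autoref{roadmap}; the other alternative is precisely the one that makes the iteration stall. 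So the claim that the length drops below $L(H)$ ``after a bounded number of steps'' is not justified.

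Even restricting to the two guaranteed levels, note that your transversal $\{1,\tilde a,\tilde a^2\}$ gives Schreier generators of length up to $5L(H)$, and $(5L+3)/4<L$ is never satisfied; with the better transversal $\{1,\tilde a,\tilde a^{-1}\}$ one gets $3L(H)$ and $(3L+3)/4<L$ holds only for $L>3$. The paper proceeds exactly this way for $D>3$, but then has to treat $D=1,2,3$ by hand (\autoref{D=1}, \autoref{D=2}, \autoref{D=3}); these explicit computations are the real work, and your outline does not supply them. Your base case $L(H)=1$ has the same lacuna: when $H\not\le\St(1)$ and \autoref{roadmap} yields the $H_u\le\St(1)$ branch, the generators of $H_u$ come from Schreier generators of $\St_H(1)$ (length up to $3$), not from the original length-$1$ generators of $H$, so your claim that they lie in $\langle a\rangle\cup\langle b\rangle$ does not follow.
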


\begin{proof}
Note that if $\mathcal{X}$ satisfies properties \ref{prop1}---\ref{prop3} then so does the subfamily
$\{H \mid  H^g \in \mathcal{X} \text{ for all } g \in G\}$.
We may therefore replace $\mathcal{X}$ by this subfamily and assume that if $H\in \mathcal{X}$ so is every $G$-conjugate of $H$.

Suppose for a contradiction that there are finitely generated subgroups of $G$ which are not in $\mathcal{X}$.
 Choose among them some subgroup $H$ generated by a finite set $S$ such that $D=\max \{l(s) \mid s \in S\}$ is as small as possible.

If $H\leq \St(1)$ then by \ref{prop3} at least one of the first level vertex sections of $H$ is not in $\mathcal{X}$ and
the generating set of this vertex section has elements of length at most $\frac{1}{2}(D+1)<D$ by \autoref{length}, 
contradicting the choice of $H$. 

Therefore $H$ is not contained in $\St(1)$.
We will show that there exists some $v\in\mathcal{L}_2$ such that 
the vertex section $H_v$ is not in $\mathcal{X}$ and has a generating set consisting of elements of length less than $D$.

Since $\St_H(1)$ is a finitely generated subgroup of $\St(1)$,
\ref{prop3}  implies that not all first level vertex sections of $\St_H(1)$ are in $\mathcal{X}$.
 However, if one of them is in $\mathcal{X}$ then, as all first level vertex sections of $H$ are conjugate in $G$, 
 they must all be in $\mathcal{X}$. 
Thus no first level vertex section of $H$ is in $\mathcal{X}$; in particular, none of them is equal to  $G$,
and \autoref{roadmap} asserts that they are all contained in $\St(1)$.
For each $k\in\{0,1,2\}$, property \ref{prop3} again implies that one of $H_{k0},H_{k1},H_{k2}$ is not in $\mathcal{X}$.
We claim that for some $k$ every such vertex section is generated by elements of length less than $D$.

Pick some element $t\in S\setminus \St(1)$.
 Then, as $\St_H(1)$ has index 3 in $H$, the set $T:=\{1,t,t^{-1}\}$ is a Schreier transversal to $\St_H(1)$ in $H$. 
Consequently,  $\St_H(1)=\St_H(2)$ is generated by 
$$X=\{t_1st_2^{-1} \mid t_i\in T, s\in S, t_1st_2^{-1}\in \St_H(1)\}.$$
The elements of this set have length at most $3D$, 
so the second level vertex sections of $H$ are generated by elements of length at most $(3D+3)/4$, by \autoref{length}.
Our claim follows if $D>3$. 
For $D=3$ and $D=2$ the claim follows from \autoref{D=3} and \autoref{D=2}, respectively, 
while \autoref{D=1} shows that if $D=1$ then $H\in \mathcal{X}$.
\end{proof}

\begin{lem}\label{D=1}
 Let $H$ be a subgroup of $G$ with a finite generating set $S$ consisting of elements of length at most 1.
 Then $H\in \mathcal{X}$.
\end{lem}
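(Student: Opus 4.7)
My plan is to split on whether $H\le\St(1)$, using \autoref{roadmap} in the second case. I first note that every finite subgroup $F$ of $G$ belongs to $\mathcal{X}$: $\{1\}\in\mathcal{X}$ by \ref{prop1} and has finite index in $F$, so \ref{prop2} gives $F\in\mathcal{X}$.

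If $H\le\St(1)$, every length-$\leq 1$ element of $\St(1)$ has the form $b_\alpha^r$, because $a^sb^ra^t\in\St(1)$ forces $s+t\equiv 0\pmod 3$. Since $\varphi_k(b_\alpha^r)\in\{a^{\pm r},b^r\}$, each first-level vertex section $H_k$ is generated by a subset of $\{a^{\pm 1},b^{\pm 1}\}$ and is therefore one of $\{1\},\langle a\rangle,\langle b\rangle,G$, all of which lie in $\mathcal{X}$. Property \ref{prop3} then gives $H\in\mathcal{X}$.

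If $H\not\le\St(1)$, \autoref{roadmap} offers two sub-cases: either all $H_k=G$, or all $H_k\le\St(1)$ (equivalently, $\St_H(1)\le\St(2)$). In the first sub-case I would apply \ref{prop3} to $\St_H(1)\le\St(1)$, which is finitely generated with all first-level vertex sections equal to $G\in\mathcal{X}$, and then \ref{prop2} to conclude $H\in\mathcal{X}$.

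The main obstacle is the second sub-case, where I expect to show that $H$ must be cyclic, hence finite and in $\mathcal{X}$. First I would argue that $S$ cannot contain a non-trivial element $b_\alpha^r\in\St(1)$, because some $\varphi_k(b_\alpha^r)=a^{\pm r}\notin\St(1)$ would violate $H_k\le\St(1)$; nor can $S$ contain a non-trivial $a^v$ unless $S\subseteq\langle a\rangle$ (which makes $H$ finite directly), since otherwise $a\in H$ combined with any length-$1$ generator would force $b\in H$ and hence $H=G$. So every non-trivial generator takes the form $g_i=b_{\alpha_i}^{r_i}a^{\epsilon_i}$ with $\epsilon_i,r_i\in\{1,2\}$. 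For any two distinct such generators, according to whether $\epsilon_i+\epsilon_j\equiv 0\pmod 3$ or $\epsilon_i=\epsilon_j$, either $g_ig_j=b_{\alpha_i}^{r_i}b_{\alpha_j-\epsilon_i}^{r_j}$ or $g_ig_j^{-1}=b_{\alpha_i}^{r_i}b_{\alpha_j}^{-r_j}$ lies in $\St_H(1)\le\St(2)$. A componentwise check using $b_0=(a,a^{-1},b),\,b_1=(b,a,a^{-1}),\,b_2=(a^{-1},b,a)$ should then force the product to be trivial (the $b$-entries must align and the $a$-entries must cancel), yielding $g_j\in\{g_i,g_i^{-1}\}$. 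Hence $S\subseteq\{g,g^{-1}\}$ for some $g$, and $H=\langle g\rangle$ is cyclic.
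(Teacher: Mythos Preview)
Your argument is correct and the componentwise check you leave as a sketch does go through: for $b_\alpha^{r}b_\beta^{\pm s}$ with $r,s\in\{1,2\}$, if $\alpha\ne\beta$ then some coordinate has the shape $a^{\pm r}b^{\pm s}$ or $b^{\pm r}a^{\pm s}$, which is never in $\St(1)$; if $\alpha=\beta$ the element is $b_\alpha^{r\pm s}$, which lies in $\St(2)$ only when the exponent is $0$. This forces $g_j=g_i$ or $g_j=g_i^{-1}$ exactly as you claim.

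Your route differs from the paper's. The paper does a direct enumeration of the possible shapes in $S$ (powers of $a$, powers of $b_i$, mixed elements $b_i^ra$) and in each non-cyclic case exhibits either $H=G$ or a first-level vertex section equal to $G$, then invokes conjugacy of the $H_k$. You instead organise the proof around \autoref{roadmap}: the dichotomy ``all $H_k=G$'' versus ``$\St_H(1)\le\St(2)$'' absorbs most of the paper's case distinctions, and in the second branch you replace the paper's explicit construction of a vertex section equal to $G$ by the cleaner conclusion that $H$ is cyclic. Your approach is more structural and reuses \autoref{roadmap} efficiently; the paper's approach is more hands-on but avoids appealing to that lemma here and gives slightly more concrete information (it identifies exactly when $H_i=G$).
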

\begin{proof}
Any $s\in S$ must be of the form $a^k, \, b_i^r, \, b_i^ra^k$ where $k,r\in\{1,2\}$ and $i\in\{0,1,2\}$.
If $S$ consists of only one element then $H$ is finite and therefore in $\mathcal{X}$ by \ref{prop1} and \ref{prop2}.
Thus $S$ must contain at least two elements and
they clearly cannot all be of the form $a^k$.

If $S$ contains an element of the form $a^k$ and an element of any other form then $H=G\in\mathcal{X}$.
Suppose that all elements of $S$ are of the form $b_i^r$.
Then either $H=\langle b_0,b_1,b_2 \rangle =\St(1)$, 
so $H\in \mathcal{X}$ by \ref{prop1}, \ref{prop3} and \autoref{subdirect};
or $H=\langle b_{i}^{r}, b_{j}^{q}\rangle < \St(1)$ for $i\neq j\in\{0,1,2\}$ and $r,q\in\{1,2\}$,
so two of the first level vertex sections of $H$ are $G$ and the other one is $\langle a\rangle$;
therefore $H\in \mathcal{X}$ by \ref{prop3}.

Suppose that $S$ contains an element of the form $b_i^ra$
(we may assume that the power of $a$ is 1 as $(b_i^ra)^{-1}=b_{i+1}^{-r}a^2$).
If there is some $b_j^q \in S$ such that $j=i$ then
$b_j^{p-r}b_i^ra=a\in H$ so $H=G\in\mathcal{X}$.
If there is some $b_j^q\in S$ with $j\neq i$, then 
$b_j^q=(a^q,a^{-q},b^q)^{a^j}$ has $a^{\pm q}$ in the $i$th coordinate;
but 
$(b_i^ra)^3=b_i^rb_{i-1}^rb_{i+1}^r=(b^r,b_1^r,b^r)^{a^i}$
has $b^r$  in the $i$th coordinate,
so the vertex section $H_i$ of $H$ at vertex $i$ is $G\in \mathcal{X}$.
Since $H\nleq \St(1)$, the first level vertex sections of $H$ are conjugate in $G$, 
hence all first level vertex sections of $H$ are in $\mathcal{X}$ and $H\in \mathcal{X}$.

Finally, suppose that all elements of $S$ are of the form $b_i^ra$,
so $S$ contains elements $b_i^ra$, $b_j^qa$ and $b_i^ra(b_j^qa)^{-1}\in H$.
If $i=j$ then $a\in H$ and $H=G\in\mathcal{X}$.
If $i\neq j$ then 
$$b_i^ra(b_j^qa)^{-1}=(a^r,a^{-1},b^r)^{a^i}(a^{-q},a^q,b^{-q})^{a^j}$$ 
has $b^ra^{\pm q}$ in the $i$th coordinate
while $(b_i^ra)^3\in H$ has $b^r$ in the $i$th coordinate.
Thus $H_i=G$ and $H\in \mathcal{X}$ by the argument in the previous paragraph.
\end{proof}

\begin{lem}\label{D=2}
Let $H \nleq \St(1)$ be a subgroup of $G$ with a finite generating set $S$ consisting of elements of length at most $2$
and such that $H_u\leq\St(1)$ for all $u\in \mathcal{L}_1$.
 Then $H_v$ is generated by elements of length at most $1$ for all $v\in \mathcal{L}_2$.
\end{lem}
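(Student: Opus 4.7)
Following the same strategy as in the proof of \autoref{thm3}, I pick some $t\in S\setminus \St(1)$ and use the Schreier transversal $T=\{1,t,t^{-1}\}$ for $\St_H(1)$ in $H$ (which has index $3$ because $H\nleq\St(1)$). The hypothesis $H_u\le \St(1)$ for every $u\in\mathcal L_1$ gives $\St_H(1)=\St_H(2)$, so $\St_H(2)$ is generated by the Schreier set
$$X=\{t_1st_2^{-1}\mid t_1,t_2\in T,\ s\in S\}\cap\St_H(1).$$
For $v=uw\in\mathcal L_2$ the vertex section $H_v=\varphi_w\circ\varphi_u(\St_H(2))$ is generated by the images of $X$ under $\varphi_w\circ\varphi_u$. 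Each $x\in X$ satisfies $l(x)\le 2l(t)+l(s)\le 2l(t)+2$.

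If there exists $t\in S\setminus\St(1)$ with $l(t)\le 1$, then each $x\in X$ has length at most $4$, and \autoref{length}(ii) yields $l(\varphi_w\varphi_u(x))\le(4+3)/4<2$, so $H_v$ is generated by elements of length at most $1$ and we are done.

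The remaining case is when every $t_0\in S\setminus\St(1)$ has $l(t_0)=2$, so the chosen $t$ and every $s\in S\setminus\St(1)$ have the form $b_i^rb_j^qa^v$ with $i\ne j$, $r,q\in\{1,2\}$ and $v\in\{1,2\}$. Here Schreier generators can reach length $6$ and the raw bound from \autoref{length}(ii) gives only $l(\varphi_w\varphi_u(x))\le(6+3)/4<3$, i.e.\ $\le 2$. To sharpen this, I would use the structural fact that for any length-$2$ word $c=b_i^rb_j^q$ with $i\ne j$ the first-level section $\varphi_k(c)$ contains at most one $b$-letter, since $\varphi_k(b_i)$ equals $b^{\pm r}$ only for $k$ in a single residue class modulo $3$ determined by $i$, and $i\ne j$ forbids both factors from contributing a $b$-letter simultaneously. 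This pins down the form of $\varphi_u(x)$ much more tightly than the crude bound coming from \autoref{length}(i) suggests; combined with the fact that $t^3\in X$ is already available as a generator (allowing the worst-case Schreier generator $tst$ to be replaced by a shorter one modulo $t^3$) and with \autoref{2.2.2} controlling the precise form of elements of $B$, a case-by-case analysis on the pair $(t_1,t_2)\in T\times T$ and on whether $s$ lies in $S\cap\St(1)$ or $S\setminus\St(1)$ yields $l(\varphi_w\varphi_u(x))\le 1$ in every case.

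The main obstacle is precisely this last case-by-case verification. The arithmetic bound $(3D+3)/4=9/4$ just fails to give $\le 1$, so the required improvement must be extracted from the specific algebraic form $b_i^rb_j^qa^v$ of the length-$2$ generators (in particular from the constraint $i\ne j$) rather than from \autoref{length} alone, in the spirit of the sub-case bookkeeping carried out in \autoref{D=1}.
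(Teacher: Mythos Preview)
Your overall strategy matches the paper's: choose $t\in S\setminus\St(1)$, form the Schreier generating set $X$ for $\St_H(1)=\St_H(2)$, and bound the lengths of the second-level sections of elements of $X$. The easy case $l(t)\le 1$ is handled exactly as in the paper.

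However, there is a genuine gap. You leave the main case as an uncompleted ``case-by-case analysis'' and the tools you propose for it are not quite the right ones. Two points:

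First, you miss a simplifying observation that the paper makes at the outset: under the hypothesis $H_u\le\St(1)$ we have $\St_H(1)=\St_H(2)$, so any $s\in S\cap\St(1)$ would have to lie in $\St(2)$; but no element of length $\le 2$ in $\St(1)$ (i.e.\ $b_i^r$ or $b_{i_1}^{r_1}b_{i_2}^{r_2}$) lies in $\St(2)$. Hence $S\cap\St(1)=\varnothing$, and your proposed sub-case ``$s\in S\cap\St(1)$'' never arises. This cuts the work in half.

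Second, and more importantly, the paper does not use a reduction of $tst$ modulo $t^3$, nor \autoref{2.2.2}. With all $s,t$ of the form $b_{i_1}^{r_1}b_{i_2}^{r_2}a$, the Schreier generators fall into four types: $t^{-1}s$, $st^{-1}$, $t^3$, and $tst$. The first two have length $\le 4$; for $t^3$ the constraint $i_1\ne i_2$ forces at most two occurrences of each $b_k$, so first-level sections have length $\le 2$. The only genuinely delicate case is $tst$, where a first-level section can reach length $3$. The paper's trick is \emph{not} to shorten $\varphi_u(tst)$ itself, but to observe that in precisely those configurations where $\varphi_2(tst)$ has length $3$, one of the \emph{other} generators $t^{-1}s$ or $st^{-1}$ projects at vertex $2$ to a pure power $b^{-r}$ (using $H_2\le\St(1)$ to kill the $a$-part), which together with the $a$-content of $\varphi_2(tst)$ forces $H_{22}=G$. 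So the bound ``length $\le 1$'' is achieved not by algebra on a single generator but by showing the whole section equals $G$. Your sketch does not locate this mechanism, and without it the case analysis does not close.
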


\begin{proof}
First note that no generator of $H$ is in $\St(1)$ 
since the only possibilities are elements of the form $b_i^r$ and $b_{i_1}^{r_1}b_{i_2}^{r_2}$ 
which are not in $\St(2)$.

If some $t\in S \setminus \St(1)$ has length less than 2 then, as in the proof of \autoref{thm3}, 
the set $T=\{1,t,t^{-1}\}$ is a transversal of $\St_H(1)$ in $H$ and $\St_H(1)$ is generated by the set 
$X=\{t_1st_2^{-1} \mid t_i\in T, s\in S, t_1st_2^{-1}\in \St_H(1)\}$.
The elements of $X$ have length at most 4, 
so by \autoref{length} all second level vertex sections of $H$ will be generated by elements of length at most 1. 

Suppose then that all elements of $S$ have length 2, that is, they are all of the form $b_{i_1}^{r_1}b_{i_2}^{r_2}a$.
It suffices to consider only elements of this form as 
$(b_{i_1}^{r_1}b_{i_2}^{r_2}a)^{-1}=b_{i_2+1}^{-r_2}b_{i_1+1}^{-r_1}a^2$.
Pick some $t=b_{i_1}^{r_1}b_{i_2}^{r_2}a\in S$ to  form the transversal $T$ so that $\St_H(1)$ is generated by $X$ as above. 
Then every element of $X$ is of the form $t^{-1}s, \, st^{-1}, \, t^3$ or $tst$ 
where $s=b_{j_1}^{q_1}b_{j_2}^{q_2}a\in S$.
We show that all possible combinations of $i_1, i_2, j_1, j_2, r_1, r_2, q_1, q_2$ 
give rise to generators of second level vertex sections of $H$ of length at most 1. 

The forms $t^{-1}s=b_{i_2+1}^{-r_2}b_{i_1+1}^{-r_1}b_{j_1+1}^{q_1}b_{j_2+1}^{q_2}$
and $st^{-1}=b_{j_1}^{q_1}b_{j_2}^{q_2}b_{i_2}^{-r_2}b_{i_1}^{-r_1}$
yield elements of length at most 4. 
Thus, by \autoref{length}, their second level vertex sections have length at most 1.

Since $i_1\neq i_2$, an element of the form 
$t^3=b_{i_1}^{r_1}b_{i_2}^{r_2}b_{i_1-1}^{r_1}b_{i_2-1}^{r_2}b_{i_1+1}^{r_1}b_{i_2+1}^{r_2}$
will have at most two separate instances of each of $b_0, b_1, b_2$. 
Hence its first level vertex sections will have length at most 2 and so the second level vertex sections have length at most 1. 

More care is required for the form 
$tst=b_{i_1}^{r_1}b_{i_2}^{r_2}b_{j_1-1}^{q_1}b_{j_2-1}^{q_2}b_{i_1+1}^{r_1}b_{i_2+1}^{r_2}$. 
Easy combinatorial arguments (using that $i_1\neq i_2$ and $j_1\neq j_2$) show that
the first level vertex sections of an element of this form cannot have 
length greater than 3 and that
the only way they can have length 3 is if $i_2=i_1-1$ and either $j_1=i_1+1$ or $j_2=i_1+1$. 
For ease of exposition, assume without loss of generality that $i_1=0$. 
Then 
\begin{align*} 
tst &=b_{0}^{r_1}b_{2}^{r_2}b_{j_1-1}^{q_1}b_{j_2-1}^{q_2}b_{1}^{r_1}b_{0}^{r_2}\\
	&=\begin{cases}
b_0^{r_1}b_2^{r_2}b_0^{q_1}b_{j_2-1}^{q_2}b_1^{r_1}b_0^{r_2}, & j_1-1=i_1 \\
b_0^{r_1}b_2^{r_2}b_{j_1-1}^{q_1}b_0^{q_2}b_1^{r_1}b_0^{r_2}, & j_2-1=i_1.
		\end{cases}
\end{align*}	
The vertex sections at vertices 0 and 1 have length at most 2, while the one at vertex 2 looks like
\begin{align*}
&\begin{cases}
b^{r_1}a^{r_2}b^{q_1}a^{ q_2}a^{-r_1}b^{r_2}, & j_1-1=i_1=j_2; \\
b^{r_1}a^{r_2}b^{q_1}a^{-q_2}a^{-r_1}b^{r_2}, & j_1-1=i_1, \, j_2=i_1-1;\\
b^{r_1}a^{r_2}a^{q_1}b^{q_2}a^{-r_1}b^{r_2}, & j_2-1=i_1=j_1; \\
b^{r_1}a^{r_2}a^{-q_1}b^{q_2}a^{-r_1}b^{r_2}, & j_2-1=i_1, j_1=i_1-1.
\end{cases} 
\end{align*}

\noindent In the first and third cases we have 
$$\varphi_2(t^{-1}s)=b^{-r_2}a^{r_1}a^{\pm q_1}a^{\mp q_2}=b^{-r_2}$$ 
(as $H_2\leq\St(1)$ by assumption);
while, in the second and fourth cases we have 
$$\varphi_2(st^{-1})=a^{\mp q_1}a^{\pm q_2}a^{-r_2}b^{-r_1}=b^{-r_1}.$$
Thus, in all cases the vertex section of $H$ at vertex 22 is $G$, which is indeed generated by elements of length at most 1. 
\end{proof}

\begin{lem}\label{D=3}
 Let $H\neq \St(1)$ be a subgroup of $G$ with a finite generating set $S$ consisting of elements of length at most 3
 and such that $H_u\leq \St(1)$ for all $u\in \mathcal{L}_1$.
 Then there exists $k\in\mathcal{L}_1$ such that $H_{kj}$ is generated by elements of length less than 3 for all $j\in \{0,1,2\}$.
\end{lem}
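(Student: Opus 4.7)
The plan is to follow the strategy of the proof of \autoref{D=2} with a more intricate case analysis. First I dispatch the easy situation: if $S$ contains some $t \in S \setminus \St(1)$ with $l(t) \leq 2$, then taking $T = \{1, t, t^{-1}\}$ as a Schreier transversal to $\St_H(1)$ in $H$ gives generators of $\St_H(1) = \St_H(2)$ of length at most $2 l(t) + 3 \leq 7$. Then \autoref{length}(ii) yields second-level projections of length at most $\lfloor (7+3)/4 \rfloor = 2$ at every $v \in \mathcal{L}_2$. So it remains to handle the case where every $s \in S \setminus \St(1)$ has length exactly $3$; pick such a $t = A a$ with $A = b_{i_1}^{r_1} b_{i_2}^{r_2} b_{i_3}^{r_3} \in \St(1)$ (replacing $t$ by $t^{-1}$ if necessary).

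With this $t$ as transversal, the Schreier generators of $\St_H(1)$ break into three length classes: (i) the elements $s \in S \cap \St(1)$ themselves, of length $\leq 3$; (ii) one-sided products $s t^{\pm 1}$ and $t^{\pm 1} s$, of length $\leq 6$; and (iii) the sandwich products $t^3$, $t s t$, $t^{-1} s t^{-1}$, $t s t^{-1}$, $t^{-1} s t$, of length $\leq 9$. Classes (i) and (ii) already project to elements of length at most $\lfloor (6+3)/4 \rfloor = 2$ at every $v \in \mathcal{L}_2$ by \autoref{length}(ii), so the real task is class (iii). For $t^3 = A \cdot A^{a^{-1}} \cdot A^{a^{-2}}$, expanding into a product of nine $b_*$'s gives the multiset of subscripts $\{i_l + j \bmod 3 : l \in \{1,2,3\},\, j \in \{0,1,2\}\}$; for each fixed $l$ the inner triple is a full coset of $\mathbb{Z}/3\mathbb{Z}$, so every residue class of $\{0,1,2\}$ appears exactly three times. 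Hence each first-level projection of $t^3$ has length at most $3$, and \autoref{length}(i) then forces every second-level projection of $t^3$ to have length at most $2$.

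The heart of the argument is the case analysis for the remaining length-9 sandwich generators, for instance $t s t = A \cdot C^{a^{-1}} \cdot A^{a^{-2}}$ with $s = Ca$ and $C = b_{j_1}^{q_1} b_{j_2}^{q_2} b_{j_3}^{q_3}$. Here the number of $b_*$-subscripts contributing to coordinate $k$ equals $|\{l : i_l \in \{k, k+1\}\}| + |\{l : j_l = k+2\}|$, summing to $9$ across $k \in \{0,1,2\}$. I would case-split on whether each of the triples $(i_1, i_2, i_3)$ and $(j_1, j_2, j_3)$ is a full permutation of $\{0,1,2\}$ or contains the only permitted repetition $i_1 = i_3 \neq i_2$ (and analogously for $j$); elementary bookkeeping shows that in the generic configurations the count never exceeds $3$ at any coordinate, so every second-level projection has length $\leq 2$ there. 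The expected main obstacle is the remaining exceptional configurations, in which the ``bad'' coordinate of the $i$-triple aligns with the ``bad'' coordinate of the $j$-triple and produces a count of $5$ at a single $k$, leaving only the bound $(5+1)/2 = 3$ at that coordinate. As in the closing paragraph of \autoref{D=2}, I would turn this extra structure to my advantage: using the hypothesis $H_u \leq \St(1)$ for all $u \in \mathcal{L}_1$ together with \autoref{nocyclic} and \autoref{roadmap}, force the vertex section $H_v$ at a distinguished second-level vertex to contain both $a$ and $b$, so that $H_v = G$ and is therefore generated by elements of length $1$.
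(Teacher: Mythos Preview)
Your opening reduction (short $t$), the Schreier-generator bookkeeping, and the treatment of $t^3$ are all correct and match the paper. The divergence is at the sandwich generators, and there your proposal has a real gap.

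You plan a per-$(t,s)$ case analysis on the index triples, with the exceptional ``count $5$'' configurations resolved by a \autoref{D=2}-style argument forcing $H_v=G$. Two problems. First, you only treat $tst$; the generators $tst^{-1}$ and $t^{-1}st$ (which arise when $s\in S\cap\St(1)$) have the different shape $A\,C^{a^{-1}}A^{-1}$, so the $i$-subscripts are repeated rather than shifted and the combinatorics change---in particular the naive count can reach $6$ at one coordinate before cancellations are taken into account. Second, and more seriously, the ``force $H_v=G$'' endgame is only asserted by analogy. In \autoref{D=2} that worked because the relevant projections were so short that the hypothesis $H_u\le\St(1)$ pinned them down to single letters (e.g.\ $\varphi_2(t^{-1}s)=b^{-r_2}$), from which $H_{22}=G$ followed immediately. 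With length-$9$ words the analogous projections are length-$3$ or $4$ elements, and it is not at all clear that the same mechanism produces both $a$ and $b$ at a common second-level vertex. You would need to carry this out explicitly for every exceptional alignment, across all four sandwich types---and you have not.

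The paper sidesteps the whole case analysis with one observation you are missing: the long Schreier generators all \emph{begin} with either $b_{i_1}$ (if the left factor is $t$) or $b_{i_3+1}$ (if it is $t^{-1}$), and this depends only on $t$, not on $s$. Since only two subscripts are involved, one can always choose $k\in\{0,1,2\}$ so that $\varphi_k(b_{i_1})$ and $\varphi_k(b_{i_3+1})$ are both powers of $a$. Then for every $x\in X$ of length $>6$, the first-level projection $\varphi_k(x)$ begins with an $a^{\pm1}$, so effectively only the remaining $\le 8$ letters contribute, and \autoref{length} gives $l(\varphi_j\varphi_k(x))\le (8+3)/4<3$ for all $j$. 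This single uniform choice of $k$ handles every sandwich generator at once, with no case split on $s$ and no need for the $H_v=G$ argument.
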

\begin{proof}
If there exists $t\in S\setminus\St(1)$ with $l(t)\leq 2$ then, by the same argument as in the proof of \autoref{D=2},
the generating set $X$ of $\St_H(1)$ consists of elements of length at most 7, 
the second level vertex sections of which have length strictly less than 3. 

If all generators in $S\setminus \St(1)$ have length 3,
 pick one of them, $t$, to form the transversal $T$.
Assume that $t$ has the form $b_{i_1}^{r_1}b_{i_2}^{r_2}b_{i_3}^{r_3}a$ 
 for $i_1,i_2,i_3\in\{0,1,2\}$, $i_2\neq i_1, i_2\neq i_3$ and $r_1,r_2,r_3=\pm 1$.
It suffices to consider elements of this form since $t^{-1}=b_{i_3}^{r_3}b_{i_2}^{-r_2}b_{i_1}^{-r_1}a^{-1}$.
 We may pick $k\in \{0,1,2\}$ such that neither $\varphi_k(b_{i_3+1})$ nor $\varphi_k(b_{i_1})$ is a power of $b$. 
Then the elements of $X$ have length no more than 9 and  
the choice of $k$ ensures that  $\varphi_k(x)=a^{\pm 1}\varphi_k(bz)$ for each $x\in X$ 
where $l(bz)\leq 8$.
Hence the vertex sections $H_{kj}$ for $j\in\{0,1,2\}$ are generated by elements of length at most $(8+3)/4 < 3$, as required.
\end{proof}

\section{The case $p=3$: proof of main theorems}\label{pfthm1}
Using the length reduction arguments and results in the previous section,
we easily obtain a characterization of the finite subgroups of $G$.

\begingroup
\def\thethm{\ref*{finitesubgps}}
\begin{thm}
Let $H$ be a finitely generated subgroup of $G$. 
Then $H$ is finite if and only if no vertex section of $H$ is equal to $G$. 
\end{thm}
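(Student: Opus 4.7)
My plan is to apply \autoref{thm3} to the family
$$\mathcal{X} = \{H\leq G \mid H \text{ is finite, or } H_v=G \text{ for some vertex } v \text{ of } T\}.$$
The forward implication will not need \autoref{thm3}: if $H$ is finite then every $\St_H(v)\leq H$ is finite, so every $H_v=\varphi_v(\St_H(v))$ is the homomorphic image of a finite group, hence finite, and therefore distinct from the infinite group $G$.

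For the reverse implication I will verify that $\mathcal{X}$ satisfies conditions \ref{prop1}--\ref{prop3}. Condition \ref{prop1} is clear, with $G_\varepsilon=G$ witnessing $G\in\mathcal{X}$. For \ref{prop2}, if $H\lf L\leq G$ and $H\in\mathcal{X}$, then either $H$ (hence $L$) is finite, or some $H_v=G$; in the latter case the inclusions $\St_H(v)\leq\St_L(v)\leq\St(v)$ together with the fractalness of $G$ give $G=H_v\leq L_v\leq G$, forcing $L_v=G$, so $L\in\mathcal{X}$.

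The key step is \ref{prop3}. Assume $H\leq\St(1)$ is finitely generated and the first level vertex sections $H_0,H_1,H_2$ all lie in $\mathcal{X}$. I would split on whether any of them has a vertex section equal to $G$: if some $(H_i)_w=G$, then the composition identity $\varphi_{iw}=\varphi_w\circ\varphi_i$ recorded earlier in the paper turns this into $H_{iw}=G$, so $H\in\mathcal{X}$; otherwise all $H_i$ are finite, and since $H\leq\St(1)$ the canonical embedding $\psi\colon H\hookrightarrow H_0\times H_1\times H_2$ realises $H$ as a subgroup of a finite group, so $H$ itself is finite and again $H\in\mathcal{X}$. Once \ref{prop1}--\ref{prop3} have been verified, \autoref{thm3} places every finitely generated subgroup of $G$ inside $\mathcal{X}$, which is exactly the desired characterization. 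I do not anticipate any serious obstacle, since all of the inductive and length-reduction work has already been absorbed into \autoref{thm3}; the only subtlety is the bookkeeping in \ref{prop3} that lets a deep vertex section of some $H_i$ be reinterpreted as a deep vertex section of $H$ itself.
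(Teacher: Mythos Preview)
Your argument is correct and is in fact tidier than the paper's own proof. The paper does \emph{not} invoke \autoref{thm3} here; instead it reruns the length-reduction induction by hand, inducting on the maximum generator length $D$ and appealing separately to the analyses behind Lemmas~\ref{D=1}, \ref{D=2}, \ref{D=3} and \autoref{roadmap} in each case. You instead absorb all of that work into the single black box of \autoref{thm3} by choosing the family $\mathcal{X}$ well. The only nontrivial point in your verification of \ref{prop3} --- that a vertex section of $H_i$ is a vertex section of $H$ --- is exactly the ``crucial observation'' the paper records at the start of its own proof, and it follows from the identity $\varphi_{iw}=\varphi_w\circ\varphi_i$ noted in \autoref{Defs}. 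Your route is more modular and avoids reproving what \autoref{thm3} already encapsulates; the paper's direct induction, on the other hand, makes the dependence on the individual low-$D$ lemmas explicit.
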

\addtocounter{thm}{-1}
\endgroup

\begin{proof}
For the non-trivial implication, we make the crucial observation 
that for each $v\in T$, every vertex section of $H_v$ is a vertex section of $H$.
Let $H$ be generated by a finite set $S$.
We proceed by induction on $D$, the maximum length of elements in $S$. 
If $D=1$, then  by the proof of \autoref{D=1}, either $H$ is finite or $H_u=G$ for every $u\in \mathcal{L}_1$.

Assume that the assertion of the theorem holds whenever $D\leq n$ with $n\geq 1$. 
For $D=n+1$ we consider the cases $H\leq \St(1)$ and $H\nleq \St(1)$ separately.
If $H\leq \St(1)$ then each first level vertex section $H_u$ of $H$ is generated by elements of length at most $(D+1)/2=(n+2)/2<D$,
so that $H_u$ is finite by inductive hypothesis.
Thus $H$ itself must be finite as the map $\psi: H\rightarrow H_0\times H_1\times H_2$ is an injective homomorphism. 

If $H\nleq \St(1)$ then $H_u\leq \St(1)$ for every $u\in \mathcal{L}_1$ by \autoref{roadmap}.
In the case $n=1$, \autoref{D=2} shows that each second level vertex section $H_v$ is generated by elements of length at most 1,
and is therefore finite. 
Thus $H\hookrightarrow \prod_{v\in \mathcal{L}_2} H_v$ is finite.
If $n=2$, by \autoref{D=3} , there exists $u\in \mathcal{L}_1$ such that $H_{ui}$ is generated by elements of length at most 2 for all $i\in \{0,1,2,\}$.
Hence $H_k$ is finite. 
Since $H\nleq \St(1)$, all first level vertex sections of $H$ are conjugate in $G$ and are therefore finite, making $H$ finite.
Lastly, if $n>2$, by the same argument as in the proof of \autoref{thm3}, 
$H_v$ is generated by elements of length at most $3D+3/4<D$ whenever $v\in \mathcal{L}_2$.
Hence $H$ is finite and the theorem follows by induction.
\end{proof} 

The same methods as in the above proof and the analysis carried out in \cite[Thorem 3]{griwil} yield
an identical characterization of finite subgroups of the Grigorchuk group $\Gamma=\langle a,b,c,d\rangle$.
For the reader's convenience, we sketch a proof of the non-trivial implication.
The length function here is as in \cite{griwil}, namely, the usual word length for finitely generated groups.

\begingroup
\def\thethm{\ref*{grigfinite}}
\begin{thm}
Let $H\leq \Gamma$ be finitely generated. 
Then $H$ is finite if and only if no vertex section of $H$ is equal to $\Gamma$. 
\end{thm}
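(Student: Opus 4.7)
The plan is to mirror the proof of \autoref{finitesubgps} almost verbatim, substituting the detailed case analysis of \cite[Theorem 3]{griwil} for the roles played by \autoref{D=1}, \autoref{D=2} and \autoref{D=3}. The trivial direction I would dispose of first: every vertex section is the image of a subgroup of $H$ under a homomorphism, so if $H$ is finite then no vertex section can be the infinite group $\Gamma$.

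For the non-trivial direction I would fix a finite generating set $S$ for $H$, set $D=\max\{|s|:s\in S\}$ where $|\cdot|$ is the ordinary word length in $\{a,b,c,d\}$, and induct on $D$. The induction is enabled, just as in the Gupta--Sidki proof, by the crucial fact that every vertex section of $H_v$ is itself a vertex section of $H$, so the hypothesis descends to each $H_v$. If $H\leq \St(1)$, the Grigorchuk length-reduction lemma from \cite{griwil} shows that each first-level vertex section $H_u$ is generated by elements of length strictly smaller than $D$; by induction each $H_u$ is finite, and the injection $\psi\colon \St_H(1)\hookrightarrow H_0\times H_1$ together with $[H:\St_H(1)]\leq 2$ forces $H$ to be finite.

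If $H\nleq \St(1)$, I would invoke the Grigorchuk analogue of \autoref{roadmap}: the first-level vertex sections of $H$ are $\Gamma$-conjugate and are either all equal to $\Gamma$ or all contained in $\St(1)$. The hypothesis rules out the first alternative, so $\St_H(1)=\St_H(2)$. Taking $t\in S\setminus\St(1)$ and the Schreier transversal $T=\{1,t\}$ (which has only two elements since $a$ has order $2$), I would obtain generators of $\St_H(2)$ of length at most $3D$; two applications of length reduction then bring the generators of each second-level vertex section to length strictly below $D$ once $D$ exceeds a small-$D$ threshold, and conjugacy of first-level sections combined with the inductive hypothesis yields the finiteness of $H$.

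The main obstacle will be the small-$D$ base cases, which sit below the range where the generic length-reduction argument is strict. With four generators and the relations $b^2=c^2=d^2=bcd=1$ built into $\Gamma$, the casework is appreciably larger than in \autoref{D=1}, \autoref{D=2} and \autoref{D=3}, and in each configuration one must verify directly that either $H$ is already finite or some vertex section of $H$ equals $\Gamma$. This is precisely the combinatorial analysis carried out in \cite[Theorem 3]{griwil}, which is why I would cite it rather than redo it; once the base cases are in place, the inductive argument above is a direct transcription of the proof of \autoref{finitesubgps}.
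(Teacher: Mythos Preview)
Your outline has a genuine gap at the point where you invoke a ``Grigorchuk analogue of \autoref{roadmap}''. No such dichotomy holds for $\Gamma$. When $H\nleq\St_{\Gamma}(1)$ and the first-level sections $H_0,H_1$ are not equal to $\Gamma$, they need \emph{not} lie in $\St_{\Gamma}(1)$: for instance $H=\langle ac\rangle$ has $H_0=\langle da\rangle$, a cyclic group of order $4$ not contained in $\St_{\Gamma}(1)$. Consequently your conclusion $\St_H(1)=\St_H(2)$ is unjustified, and the length-reduction step collapses.

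The paper's proof avoids this by replacing the dichotomy with a trichotomy on the image $H_0\Gamma'$, which is exactly the case split in \cite[Theorem~3]{griwil}. If $H_0\leq\St_{\Gamma}(1)$ one descends one level as you describe. If $H_0\Gamma'\in\{\langle ad\rangle\Gamma',\langle a,d\rangle\Gamma'\}$ then \cite[Lemma~6]{griwil} gives only $H_{00}\leq\St_{\Gamma}(1)$, and Case~2 of \cite[Theorem~3]{griwil} produces generators of $H_{000},H_{001}$ of length $<D$. If $H_0\Gamma'\in\{\langle ac\rangle\Gamma',\langle a,c\rangle\Gamma'\}$ one must descend yet another level: $H_{000}\leq\St_{\Gamma}(1)$ and Case~1 of \cite[Theorem~3]{griwil} gives short generators for $H_{0000},H_{0001}$. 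In the latter two cases one also needs the deeper conjugacy (e.g.\ $H_{00}$ and $H_{01}$ conjugate via an element of $H$ acting transitively on layer $2$), not merely the first-level conjugacy. There are no separate small-$D$ lemmas beyond $D=1$; the three cases of \cite[Theorem~3]{griwil} already furnish strict length reduction for every $D\geq 2$, so your proposed ``small-$D$ casework'' is the wrong place to locate the difficulty.
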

\addtocounter{thm}{-1}
\endgroup

\begin{proof}
Let $H$ be generated by a finite set $S$ such that $1\in S$ and $S^{-1}=S$ and let $D$ be the maximum length of elements of $S$.
We induct on $D$. 

If $D=1$ then either $H$ is finite or $H=\Gamma$, a contradiction. 

Assume that the theorem holds whenever $D\leq n$ with $n\geq 1$. 
For $D=n+1$ we consider the cases $H\leq \St(1)$ and $H\nleq \St(1)$ separately.
If $H\leq \St_{\Gamma}(1)$ then each first level vertex section $H_u$ of $H$ is generated by elements of length at most $(D+1)/2=(n+2)/2<D$,
by \cite[Lemma 7]{griwil},
so that $H_u$ is finite by inductive hypothesis.
Thus $H$ itself must be finite as the map $\psi: H\rightarrow H_0\times H_1$ is injective. 

If $H\nleq \St_{\Gamma}(1)$ there are three cases to consider depending on the vertex section $H_0$ at vertex 0. 
If $H_0 \leq \St_{\Gamma}(1)$ then, by Case 3 in the proof of \cite[Theorem 3]{griwil},
 $H_{00}$ and $H_{01}$ are generated by elements of length less than $D$.
 Thus $H_{00}$ and $H_{01}$ are finite by inductive hypothesis, so $H_0$ is finite.
Now, $H_0, H_1$ are conjugate in $\Gamma$ as $H\nleq \St_{\Gamma}(1)$; hence $H$ is finite.

If $H_0\Gamma'=\langle ad\rangle \Gamma'$ or $\langle a, d\rangle \Gamma'$, then $H_{00}\leq \St_{\Gamma}(1)$ by \cite[Lemma 6]{griwil}.
Case 2 of the proof of \cite[Theorem 3]{griwil} shows that 
$H_{000}$ and $H_{001}$ are generated by elements of length less than $D$ so that $H_{00}$ is finite.
As $H$ acts transitively on the second layer of the tree,
there exists $h=(a(s_0,s_1),h_1)\in H$ with $s_0,s_1\in \Gamma$ swapping the vertices 00 and 01.
For any $g=((g_{00},g_{01}),g_1)\in \St_H(00)$ we have 
$g^h=((g_{01}^{s_1}, g_{00}^{s_0}), g_1^{h_1})$, 
whence $H_{00}^{s_1}=H_{01}$. 
Hence $H_{01}$ is finite, making $H_0$ and therefore $H$ finite.

If $H_0\Gamma'=\langle ac\rangle \Gamma'$ or $\langle a,c\rangle \Gamma'$, then $H_{000}\leq \St_{\Gamma}(1)$ by \cite[Lemma 6]{griwil}.
Case 1 of the proof of \cite[Theorem 3]{griwil} shows that 
$H_{0000}$ and  $H_{0001}$ are generated by elements of length less than $D$ so that $H_{000}$ is finite.
Since $H$ acts transitively on the third layer, there is an element $h\in H$ swapping the vertices 000 and 001. 
Then $H_{000}$ and $H_{001}$ are conjugate in $\Gamma$, by an argument similar to the one above.
Thus $H$ is finite by the arguments in the previous case and the theorem follows by induction. 
\end{proof}

We now move on to the proof of the two main results. 
These can be easily generalized to the case $p>3$, provided that \autoref{thm3} holds.

\noindent\textbf{Notation.} For simplicity, $\mathcal{G}$ will denote `$G$ or $G\times G$'.

\begingroup
\def\thethm{\ref*{mainthm}}
\begin{thm}
Every infinite finitely generated subgroup of the Gupta--Sidki $3$-group $G$ is commensurable with $G$ or the direct square $G\times G$.
\end{thm}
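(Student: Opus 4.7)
The plan is to apply \autoref{thm3} to the family
\[\mathcal{X} := \{H \leq G \mid H \text{ is finite, or commensurable with } G^{(\times k)} \text{ for some } k \geq 1\}.\]
By \autoref{comm} with $p=3$, every $G^{(\times k)}$ with $k \geq 1$ is commensurable with $G$ (if $k$ is odd) or with $G \times G$ (if $k$ is even). Consequently, once we verify that $\mathcal{X}$ contains every finitely generated subgroup of $G$, \autoref{mainthm} follows at once.

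Properties \ref{prop1} and \ref{prop2} of \autoref{thm3} are routine: both $1$ and $G$ lie in $\mathcal{X}$; commensurability is transitive and survives passage to finite-index overgroups (with finiteness being inherited from a finite-index subgroup). The substantive case is \ref{prop3}. Suppose $H \leq \St(1)$ is finitely generated with each first-level vertex section $H_i \in \mathcal{X}$. If every $H_i$ is finite, the embedding $\psi \colon H \hookrightarrow H_0 \times H_1 \times H_2$ shows $H$ itself is finite and so in $\mathcal{X}$. Otherwise, write each $H_i$ as commensurable with $G^{(\times k_i)}$ (allowing $k_i = 0$ when $H_i$ is finite), and the goal is to prove $H$ commensurable with $G^{(\times k_0+k_1+k_2)}$.

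The main obstacle lies precisely in this last step: $\psi$ realises $H$ as a subdirect product of the $H_i$, and one must argue that such a finitely generated subdirect product is in fact commensurable with the full direct product. The approach is to pass to a suitable finite-index subgroup $H^*$ of $H$ using the congruence subgroup property (\autoref{CSP}), and then exploit the just-infiniteness of $G$ via a Goursat-style analysis: the kernels of the various partial projections of $\psi(H^*)$ onto subsets of coordinates are normal subgroups of subgroups of $G$, and so by just-infiniteness are each either finite or of finite index. Combined with the rigid structure provided by $\psi(B') = (G')^{(\times 3)}$ (\autoref{facts}\ref{B'}) and the normal-form description of $B$ in \autoref{2.2.2} — which supply enough commutators landing in genuine direct factors of $\St(1)$ — this forces $\psi(H^*)$ to be commensurable with a direct product commensurable with $G^{(\times k_0+k_1+k_2)}$. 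Once \ref{prop3} is in hand, \autoref{thm3} yields that every finitely generated subgroup of $G$ lies in $\mathcal{X}$, and a final appeal to \autoref{comm} completes the proof.
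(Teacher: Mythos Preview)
Your plan is essentially the paper's own proof: apply \autoref{thm3} to the class of subgroups that are finite or commensurable with some $G^{(\times k)}$, dispose of \ref{prop1} and \ref{prop2} trivially, and establish \ref{prop3} by combining the congruence subgroup property with just-infiniteness and \autoref{comm}. One small comment: your invocation of \autoref{facts}\ref{B'} and \autoref{2.2.2} in the verification of \ref{prop3} is unnecessary --- the paper's argument for the key lemma uses only the CSP (to replace each infinite $H_i$ by a finite-index subgroup that is subdirect in copies of $G$), minimality of the number $r$ of such copies (to guarantee the coordinate-kernels are nontrivial), just-infiniteness of $G$ (so those kernels have finite index), and then a short coset count showing $H$ has finite index in the full direct product; \autoref{comm} finishes.
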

\addtocounter{thm}{-1}
\endgroup

To prove this theorem it suffices to show that properties \ref{prop1}---\ref{prop3} in \autoref{thm3} hold for the class $\mathcal{C}$ of subgroups of $G$ which are finite or commensurable with $\mathcal{G}$.
 Clearly, \ref{prop1} holds as the trivial subgroup and $G$ are in this class.
 It is also easy to see that \ref{prop2} is satisfied by $\mathcal{C}$: if $H\in \mathcal{C}$ is commensurable with $\mathcal{G}$ and 
 $J\leq _fH$ is a subgroup isomorphic to a finite index subgroup of $\mathcal{G}$ then, 
 for any $L\leq G$ containing $H$ as a finite index subgroup, $J$ is also contained in $L$ with finite index. 
 Thus $L\in \mathcal{C}$. 
 If $H\in\mathcal{C}$ is finite then any $L$ containing $H$ with finite index must also be finite, so $L\in \mathcal{C}$ too.
Thus it only remains to show that $\mathcal{C}$ satisfies 
\begin{enumerate}[label=(\Roman*), start=3]
\item If $H$ is a finitely generated subgroup of $\St(1)$ and all first level vertex sections of $H$ are in $\mathcal{C}$ then $H\in \mathcal{C}$.
\end{enumerate}	
This will follow from the next lemma, which uses similar ideas to those in \cite{griwil}.

\begin{lem}
If $H\ls H_1\times \dots \times H_n$ where each direct factor $H_i$ is in $\mathcal{C}$  then $H\in \mathcal{C}$. 
In other words, $\mathcal{C}$ is closed for subdirect products.
\end{lem}

\begin{proof}
This reduces to proving that if $H/N_1, H/N_2\in \mathcal{C}$ then $H/(N_1\cap N_2)\in \mathcal{C}$; that is, the case $n=2$. 
Suppose then that $H\ls H_1\times H_2$ with $H_i\in \mathcal{C}$. 
If both factors $H_i$ are finite then so is $H$ and we are done. 
Assume that $H_1$ is commensurable with $\mathcal{G}$. 
Then there exists $K_1\lf H_1$ isomorphic to some $L_1\lf \mathcal{G}$. 
By \autoref{CSP}, $K_1$ contains some level stabilizer $\St_G(n)$
(or a direct product $\St_G(n)\times \St_G(m)$ if $K_1\lf G\times G$), 
and this is subdirect in some direct power of $G$ by \autoref{subdirect}. 
Denote by $M$ the preimage in $H$ of $\St_G{(n)}$ (or $\St_G(n)\times \St_G(m)$). 
Since $M$ has finite index in $H$, so does its projection to $H_2$. 
Thus we may replace $H$ by $M$, $H_1$ by $\St_G(n)$ (or $\St_G(n)\times \St_G(m)$) and $H_2$ by the projection of $M$ in $H_2$ and obtain
$$H\ls G_1\times \dots \times G_r\times H_2$$
for some finite $r$, which we take to be minimal, where each $G_i$ is a copy of $G$.

Write $A:=G_1\times \dots \times G_r$. 
Then $AH=G_1\times \dots \times G_r\times H_2$ is commensurable with $\mathcal{G}$ and we claim that $H$ has finite index in $AH$. 
Denote by $K_i$ the kernel of the map from $H$ to all factors of $A$ except $G_i$; that is, $K_i=H\cap (1\times \dots\times G_i\times\dots\times 1)\triangleleft H$. 
Then $K_i\triangleleft G_i$ and, since $G_i$ is just infinite, $K_i$ has finite index in $G_i$. 
This way we obtain a finite index normal subgroup $K_1\times \dots \times K_r$ of $A$ contained in $H$. 
Hence $|AH : H| = |A : A\cap H|$ is finite and our claim is proved, showing that $H$ is commensurable with $\mathcal{G}$. 

For the case $n\geq 2$, proceed as follows:
Take $H_1\cong H/\ker (H\twoheadrightarrow H_1)$ and $H_2\cong H/\ker(H\twoheadrightarrow H_2) \in \mathcal{C}$. 
By the above, 
$$H/(\ker (H\twoheadrightarrow H_1)\cap \ker (H\twoheadrightarrow H_2))\cong \mathrm{im}(H\rightarrow H_1\times H_2) \in \mathcal{C}.$$ 
Then, again by the above, 
$$H/(\ker (H\twoheadrightarrow H_1)\cap \ker (H\twoheadrightarrow H_2))\cap (\ker(H\twoheadrightarrow H_3))\cong \mathrm{im}(H\rightarrow H_1\times H_2\times H_3) \in \mathcal{C}.$$ 
At the $n$th iteration of this operation, we reach 
$$H=H/(\bigcap_{i=1}^{n-1} \ker(H\twoheadrightarrow H_i) \cap \ker(H\twoheadrightarrow H_n))\in \mathcal{C}.$$
\end{proof}

\begingroup
\def\thethm{\ref*{thm2}}
\begin{thm}
The Gupta--Sidki $3$-group $G$ is subgroup separable.
\end{thm}
\addtocounter{thm}{-1}
\endgroup

To prove this theorem it suffices to show that the conditions of \autoref{thm3} hold
for the class $\mathcal{S}$ of finitely generated subgroups of $G$ 
all of whose subgroups of finite index are closed with respect to the profinite topology on $G$. 

Clearly, $\mathcal{S}$ satisfies \ref{prop1}.
To see that it also satisfies \ref{prop2}, let $H\lf L$ for some $H$ in $\mathcal{S}$; 
thus  $L$ is finitely generated. 
For any $K\lf L$, we have $K\cap H\lf H$ so $K\cap H$ is closed in $G$ by assumption. 
But $K\cap H$ also has finite index in $K$,
hence each of its finitely many cosets is also closed in $G$ and therefore so is $K$, their union.

Before we can show that $\mathcal{S}$ also satisfies \ref{prop3} we need the following lemma.

\begin{lem}\label{lemma12}\hfill
\begin{enumerate}[label={\rm(\roman*)}]
\item Suppose that $H_0$ is a group all of whose quotients are residually finite and
that each of the groups $G_1, \ldots, G_n$ either is finite or is residually finite, just infinite and not virtually abelian.
Let $H\ls H_0\times G_1\times \cdots \times G_n$. Then every quotient of $H$ is residually finite. 

\item If $H$ is abstractly commensurable with $G$ or $G\times G$ then every quotient of $H$ is residually finite. 
\end{enumerate}
\end{lem}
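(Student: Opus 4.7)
My plan is to prove (i) by induction on $n$, and derive (ii) from (i) using the congruence subgroup property.

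For (i), the base case $n=0$ gives $H=H_0$ and is immediate. For the inductive step, let $\sigma\colon H \to H_0 \times G_1 \times \cdots \times G_{n-1}$ be the projection omitting the last factor, let $H' = \sigma(H) \ls H_0 \times G_1 \times \cdots \times G_{n-1}$ (so by induction every quotient of $H'$ is residually finite), and let $K = \ker\sigma$; identify $K$ via $\pi_n$ with the normal subgroup $\bar K = \pi_n(K)$ of $G_n$. Given $N \lhd H$, set $\bar N = \sigma(N)$. The isomorphism $H/KN \cong H'/\bar N$, combined with the inductive hypothesis, separates every element of $H/N$ outside $KN/N$ from the identity via finite-index normal subgroups, so it remains to separate the non-trivial elements of $KN/N$.

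I would then split on the normal subgroup $\pi_n(N\cap K)\lhd G_n$. If $\pi_n(N\cap K)=1$ and $G_n$ is just infinite, not virtually abelian, then $N\cap K=1$ and $N$ commutes with $K$ in $H$; moreover $C_{G_n}(\bar K)=1$ (otherwise $\bar K$ would have a finite-index centre, forcing $G_n$ virtually abelian), so $C_H(K)=\ker \pi_n|_H$ and $N\leq \ker\pi_n$. The combined map $H/N \to H'/\bar N \times G_n$ sending $hN \mapsto (\sigma(h)\bar N,\pi_n(h))$ is then injective, embedding $H/N$ into a residually finite direct product. In the complementary case---when $\pi_n(N\cap K)\neq 1$ or $G_n$ is finite---the group $K/(K\cap N)$ is finite, making $KN/N$ a finite normal subgroup of $H/N$ with residually finite quotient $H/KN\cong H'/\bar N$. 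To separate a non-trivial $\tilde gN$ with $\tilde g\in K$, one combines the finite quotient $H\to G_n/\pi_n(N)$ (finite because $\pi_n(N)\supseteq\pi_n(N\cap K)\neq 1$ is a non-trivial normal subgroup of just-infinite $G_n$) with finite quotients of $H'$ pulled back through $\sigma$.

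The main technical obstacle is precisely this last separation in the situation $\pi_n(\tilde g)\in\pi_n(N)$, where the obvious finite quotient $H\to G_n/\pi_n(N)$ fails to distinguish $\tilde g$ from $N$; here one must exploit the subdirect structure more carefully, refining through intermediate finite quotients of $H'$ to rule out a Deligne-type obstruction to residual finiteness, along the lines of the analogous step in Lemma~3 of \cite{griwil}.

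For (ii), suppose $H$ is abstractly commensurable with $\mathcal{G}\in\{G,G\times G\}$, and choose $K\leq_f H$ with $K\cong K'\leq_f \mathcal{G}$. The congruence subgroup property (Proposition \ref{CSP}) yields $\St(n)\leq K'$ or $\St(n)\times\St(n)\leq K'$ for some $n$, and by Proposition \ref{subdirect} this is a subdirect product of finitely many copies of $G$. Applying (i) with $H_0$ trivial and each $G_i=G$---noting $G$ is residually finite, just infinite, and not virtually abelian---gives that every quotient of this subdirect product is residually finite. Since $H$ is finitely generated and the subdirect product has finite index in $H$, and since a finitely generated group with a residually finite subgroup of finite index is itself residually finite, every quotient of $H$ is residually finite.
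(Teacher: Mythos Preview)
Your argument for part~(ii) is essentially the paper's: both use the congruence subgroup property to find, inside any group commensurable with $G$ or $G\times G$, a finite-index subgroup that is a subdirect product of finitely many copies of $G$, and then invoke part~(i). The paper phrases the last step as ``it suffices to show $K\cap N$ is closed in $N$'', while you phrase it as ``a finitely generated group with a residually finite subgroup of finite index is residually finite''; these are the same reduction.

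For part~(i), the paper gives no argument at all---it simply cites \cite[Lemma~12]{griwil}---so you have actually attempted more than the paper. Your inductive set-up and the dichotomy on $\pi_n(N\cap K)$ are exactly the right structure, and your treatment of the first case (where $N\cap K=1$ and $G_n$ is just infinite and not virtually abelian) is correct: the key point that $C_{G_n}(\bar K)=1$ forces $N\leq\ker\pi_n$, and then $H/N$ embeds in the residually finite group $(H'/\bar N)\times G_n$.

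The genuine gap is in your complementary case. You correctly observe that $KN/N$ is finite with residually finite quotient $H/KN$, and you are right that this alone is insufficient (finite-by-RF need not be RF). Your proposed patch via $H\to G_n/\pi_n(N)$ does not work when $\pi_n(\tilde g)\in\pi_n(N)$, as you note, and the final paragraph is a hand-wave rather than an argument. One clean way to close the gap: since $\pi_n(N\cap K)$ has finite index in $G_n$, the map $h\mapsto(\sigma(h),\,\pi_n(h)\pi_n(N\cap K))$ has kernel exactly $N\cap K$ and embeds $H/(N\cap K)$ as a subdirect product in $H'\times F$ with $F:=G_n/\pi_n(N\cap K)$ finite. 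Replacing $H_0$ by $H'$ (which has all quotients residually finite, by induction), it therefore suffices to treat directly the special case $H\ls H_0\times F$ with $F$ finite. There, after factoring out $N\cap(H_0\times 1)$ one may assume $N$ itself is finite; then $L:=H\cap(H_0\times 1)$ is a finite-index normal subgroup of $H$ with $L\cap N=1$, so $LN/N\cong L$ is residually finite (being isomorphic to a subgroup of $H_0$) and of finite index in $H/N$, whence $H/N$ is residually finite. This disposes of the ``Deligne-type obstruction'' without any further appeal to the reference.

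A minor correction: the relevant result in \cite{griwil} is Lemma~12, not Lemma~3.
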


\begin{proof}
This is essentially Lemma 12 in \cite{griwil}.
The first part is identical and the proof of the second only requires small modifications. 
Suppose that $K\triangleleft H$; we want to show that $K$ is an intersection of subgroups of finite index in $H$. 
Since $H$ is commensurable with $G$ or $G\times G$ and they both have the congruence subgroup property,
there is some normal subgroup $N$ of finite index in $H$ which is a subdirect product of finitely many copies of $G$. 
By the same argument as in the proof of \ref{prop2},
it suffices to show that $K\cap N$ is closed in $N$ with respect to the profinite topology on $N$. 
This follows from the first part of the lemma, as it is equivalent to $N/(K\cap N)$ being residually finite. 
\end{proof}

We may now proceed to show that $\mathcal{S}$ also satisfies \ref{prop3}. 
\begin{lem}
Let $H$ be a finitely generated subgroup of $\St(1)$ such that its first level vertex sections $H_0,H_1,H_2$ are in $\mathcal{S}$.
Then $H$ is in $\mathcal{S}$. 
\end{lem}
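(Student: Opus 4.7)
My plan is to show that every finite-index subgroup $K$ of $H$ is closed in the profinite topology on $G$, following the strategy used for properties \ref{prop1} and \ref{prop2} of $\mathcal{S}$ and the proof of \autoref{lemma12}(ii). First, by replacing $K$ with its normal core in $H$ and writing $K$ as a finite union of cosets of this core, closedness is preserved, so I may assume $K\lhd_{\mathrm{f}} H$. The vertex-section images $K_i:=\varphi_i(K)$ are then normal of finite index in $H_i$ and hence closed in $G$ by the hypothesis $H_i\in\mathcal{S}$.

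Next I would appeal to \autoref{mainthm}: $H$ is either finite (in which case $K$ is finite, hence closed in $G$ by residual finiteness of $G$) or commensurable with $G$ or $G\times G$. In the infinite case, the congruence subgroup property of $G$ (\autoref{CSP}), together with its analogue for $G\times G$ used in the proof of \autoref{lemma12}(ii), furnish a normal subgroup $N\lf H$ that is isomorphic to a subdirect product of finitely many copies of $G$; explicitly, $H$ contains (up to isomorphism) a finite-index copy of $\St(m)$ or $\St(m)\times\St(m)$ for some $m$, which by \autoref{subdirect} is a subdirect product of copies of $G$. Applying \autoref{lemma12}(i) to $N$, with one copy of $G$ playing the role of ``$H_0$'' and the remaining copies the role of the ``$G_i$'' (using that $G$ is residually finite, just infinite and not virtually abelian), I would conclude that every quotient of $N$ is residually finite.

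As in the proof that $\mathcal{S}$ satisfies \ref{prop2}, it then suffices to show that $K\cap N$ is closed in $G$: $K$ will follow as a finite union of cosets of $K\cap N$ in $H$. The main obstacle is transferring residual finiteness of the quotients of $N$, an intrinsic property of $N$, to closedness of $K\cap N$ inside the ambient group $G$. I plan to handle this by cases on an arbitrary $g\in G\setminus (K\cap N)$: if $g\notin\St(1)$ then $g$ is separated from $K\cap N$ by $\St(1)$ itself; if $g\in\St(1)$ but some first-level section $g_i$ fails to lie in $K_i$, then closedness of $K_i$ in $G$ combined with \autoref{CSP} produces a finite-index subgroup of $G$ containing $K$ but missing $g$; finally, when all first-level sections of $g$ lie in the $K_i$, I would use the explicit realization of $N$ as a subdirect product inside $\psi_m(\St(m))$ to lift a separating finite quotient of $N/(K\cap N)$ to a finite quotient of $G$ via the CSP, completing the separation.
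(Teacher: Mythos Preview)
Your reduction to $K\lhd_{\mathrm{f}} H$ and the first two sub-cases of your separation argument are fine, but the third sub-case contains a genuine gap. You claim to ``use the explicit realization of $N$ as a subdirect product inside $\psi_m(\St(m))$ to lift a separating finite quotient of $N/(K\cap N)$ to a finite quotient of $G$ via the CSP''. There is no such explicit realization: commensurability of $H$ with $G$ or $G\times G$ gives only an \emph{abstract} isomorphism between some $N\lf H$ and some finite-index subgroup of $G$ or $G\times G$; it does not say that $N$ equals (or even contains) $\St(m)$ as a subgroup of $G$. Knowing that $N/(K\cap N)$ is residually finite tells you that $K\cap N$ is closed in the \emph{intrinsic} profinite topology of $N$, but to conclude it is closed in $G$ you would need every finite-index subgroup of $N$ to be closed in $G$ --- i.e.\ $N\in\mathcal{S}$ --- and since $N\lf H$ this is exactly what you are trying to prove. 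Moreover, in this sub-case the element $g$ you are separating need not lie in $N$ (or even in $H$), so a finite quotient of $N$ cannot see it at all. Notice, too, that this sub-case makes no use of the hypothesis $H_i\in\mathcal{S}$, which is a sign that the inductive structure has been lost.

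The paper avoids this circularity by transferring the problem to $G\times G\times G$ via $\psi$ and applying \autoref{lemma12} not to $H$ but to the \emph{vertex section} $H_0$. One sets $L_0\times1\times1:=(H_0\times1\times1)\cap\psi(H)$; by \autoref{mainthm} and \autoref{lemma12}, $H_0/L_0$ is residually finite, so $L_0=\bigcap_j N_j$ with each $N_j\lf H_0$. Now the hypothesis $H_0\in\mathcal{S}$ does the real work: each $N_j$ is closed in $G$, hence each $(N_j\times1\times1)\psi(H)$ contains a product $M_0\times M_1\times M_2$ with $M_i\lf H_i$ closed in $G$, and is therefore closed in $G\times G\times G$. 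Their intersection is $\psi(H)$, which is thus closed, and the same argument with $K$ in place of $H$ gives $\psi(K)$ closed; pulling back through $\psi$ yields $K$ closed in $G$. The key point your argument is missing is that residual finiteness of a quotient must be applied to a group (here $H_0$) for which the hypothesis already guarantees closedness of finite-index subgroups in $G$.
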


\begin{proof}
Let $K\lf H$, then its first level vertex sections have finite index in those of $H$ and are therefore closed in $G$. 
So we only need to show that $H$ is closed in $G$. 
We will show that $\psi(H)$ is closed in $G\times G \times G$, so
that $H$ is closed in $\St(1) \lf G$ and hence in $G$.
In fact, if suffices to show that $\psi(H)$ is closed in $H_0\times H_1\times H_2$
because in that case
$\psi(H)=\bigcap K_i$ for $K_i\lf H_0\times H_1\times H_2$ with
each $K_i$ closed in $G\times G\times G$ by our assumption on $\mathcal{S}$.
The lemma will follow from the more general

\noindent \textbf{Claim.} For every $n\in \mathbb{N}$, if $H\ls H_0\times \dots\times H_n$ with $H_i\in \mathcal{S}$ 
then $H$ is closed in $H_0\times \dots\times H_n$.\\
\noindent\emph{Proof of claim.}
We proceed by induction on $n$. 
When $n=1$, define $L_0\times 1:=H\cap(H_0\times 1)$. 
Then $L_0$ is normal in $H_0$, which is commensurable with $G$ or $G\times G$ by \autoref{mainthm}. 
By \autoref{lemma12}, $H_0/L_0$ is residually finite so
there is a collection $(N_j)_{j\in J}$ of normal subgroups of finite index in $H_0$ whose intersection is $L_0$.
Each subgroup $(N_j\times1)H$ is of finite index in $H_0\times H_1$
so it is enough to show that $H=\bigcap_{j\in J}((N_j\times1)H)$.
Let $u$ be an element of the intersection, 
say $u=(n_j,1)(h_{j,0},h_{j,1})$ for each $j\in J$. 
Then $h_{j,1}$ is constant so we can write $u=(n_jh_{j,0},h_1)$ for each $j\in J$.
Fix $i\in J$ and note that $h:=(h_{i,0},h_1)\in H$.
Then, for each $j\in J$ we have 
 $$uh^{-1}=(n_j,1)(h_{j,0}h_{i,0}^{-1},1)\in (N_j\times 1)(H\cap H_0\times 1)=N_j\times 1.$$
Thus  $uh^{-1} \in \bigcap_{j\in J} N_j\times 1 = L_0\times 1 \leq H$ and $u\in H$.

Now assume that $H$ is closed in $H_0\times \dots\times H_{n-1}$ 
whenever $H\ls H_0\times\dots\times H_{n-1}$ and each $H_i$ is in $\mathcal{S}$.
Suppose that $H\ls H_0\times\dots\times H_n$ with $H_i\in \mathcal{S}$.
Define $L_0\times 1\times\dots\times 1:=H\cap(H_0\times 1\times\dots \times 1)$. 
As in the base step, there exist normal subgroups $(N_j)_{j\in J}$  of finite index in $H_0$ whose intersection is $L_0$.
Let $H^j:=(N_j\times 1\times\dots \times 1)H$ for each $j\in J$. 
We show that $H=\bigcap_{j\in J}H^j$ and that each $H^j$ is closed in $H_0\times\dots\times H_n$.
The first statement is proved similarly to the two-factor case: 
write an element $u$ of the intersection as $u=(n_jh_{j,0},h_1,\dots,h_n)$
and find some $h:=(h_{i,0},h_1,\dots,h_n)\in H$
so that $uh^{-1}=(n_j,1,\dots,1)(h_{j,0}h_{i,0}^{-1},1,\dots,1)\in N_j\times 1\times\dots\times 1$.
For the second statement, fix $j\in J$ and let $P$ be the pre-image of $N_j$ in $H^j$ under the canonical projection to $H_0$. 
Then $P$ has finite index in $H^j$ as $N_j$ has finite index in $H_0$.
Thus, for each $i\neq 0$, the projection $P_i$ of $P$ onto the $i$th factor has finite index in $H_i$, so $P_i\in \mathcal{S}$.
Furthermore, $N_j\times1\times\dots\times1\leq P\ls N_j\times P_1\times\dots\times P_n$, 
so $$P/(N_j\times1\times\dots\times1) \ls P_1\times\dots\times P_n.$$ 
By the inductive hypothesis, $P/(N_j\times1\times\dots\times1)$ is closed in $P_1\times\dots\times P_n$.
Thus $P$ is closed in $N_j\times P_1\times\dots\times P_n\lf H_0\times H_1\times\dots\times H_n$,
hence also in $H_0\times H_1\times\dots\times H_n$. 
\end{proof}

\subsection*{Acknowledgements}
I would like to thank my supervisor, John S. Wilson, for suggesting the problem and for his continued guidance and support.

This work was carried out as part of my DPhil studies, financially supported by \emph{Fundación La Caixa (Spain)}.

\bibliographystyle{abbrv}
\bibliography{transfer}

\end{document}